\numberwithin{equation}{section}
\theoremstyle{plain}
  \newtheorem{theorem}{Theorem}
  \newtheorem{lemma}[theorem]{Lemma}
\theoremstyle{definition}
\numberwithin{theorem}{section}
\newcommand{\R}{\mathbb{R}}
\newcommand{\CN}{\mathbb{C}}
\newcommand{\dt}{{\Delta t}}
\newcommand{\I}{\operatorname{I}}
\newcommand{\ut}{\tilde{u}}
\newcommand{\bt}{\tilde{b}}
\newcommand{\pt}{{\tilde{p}}}
\newcommand{\perturb}{perturb}
\newcommand{\LPblock}[1]{
\tikzstyle{textbox} = [draw=black, fill=white, very thick,
    rectangle, rounded corners, inner sep=10pt, inner ysep=20pt]
\tikzstyle{titlebox} =[fill=white,draw=black, text=black]
\begin{center}
\begin{tikzpicture}
\node [textbox] (box){%
\begin{minipage}{0.8\textwidth}
\vspace{-0.3cm}
{#1}
\vspace{-0.3cm}
\end{minipage}
};
\node[titlebox, right=10pt] at (box.north west) {LP};
\end{tikzpicture}
\end{center}
}
\newcommand{\orcid}[1]{ORCID:~\href{https://orcid.org/#1}{#1}}
\newenvironment{keywords}{\par\textbf{Key words.}}{\par}
\newenvironment{AMS}{\par\textbf{AMS subject classification.}}{\par}
\title{Positivity-Preserving Adaptive Runge--Kutta Methods}
\author[1]{Stephan Nüßlein\thanks{\orcid{0000-0002-2455-4222}}}
\author[1]{Hendrik Ranocha\thanks{\orcid{0000-0002-3456-2277}}}
\author[1]{David I. Ketcheson\thanks{\orcid{0000-0002-1212-126X}}}
\affil[1]{%
King Abdullah University of Science and Technology (KAUST),
Computer Electrical and Mathematical Science and Engineering Division (CEMSE),
Thuwal, 23955-6900, Saudi Arabia}
\begin{document}

\maketitle

\begin{abstract}
Many important differential equations model quantities whose value
must remain positive or stay in some bounded interval.
These bounds may not be preserved when the model is solved numerically.
We propose to ensure positivity or other bounds by applying Runge--Kutta
integration in which the method weights are adapted in order to
enforce the bounds.  The weights are chosen at each step after calculating the
stage derivatives, in a way that also preserves (when possible) the order of
accuracy of the method.  The choice of weights is given by the solution
of a linear program.
We investigate different approaches to choosing the weights by considering
adding further constraints.
We also provide some analysis of the properties
of Runge--Kutta methods with perturbed weights.  Numerical examples demonstrate
the effectiveness of the approach, including application to both stiff and
non-stiff problems.
\end{abstract}

\begin{keywords}
  positivity preserving,
  bound preserving,
  Runge--Kutta methods,
  linear programming
\end{keywords}

\begin{AMS}
  65L06,  
  65L20,  
  65M12   
\end{AMS}

\section{Introduction}

Many physical processes can be described with differential equations.
The physical quantities that are involved in these processes often only make sense if they remain within certain bounds.
For instance, concentrations must be non-negative (we will often say simply {\em positive} for short), while
probabilities or mass fractions must remain in $[0,1]$.
The ordinary differential equations (ODEs) or partial differential equations
(PDEs) that model these quantities are often too complex to be solved
analytically and therefore require numerical approximation.
Numerical methods generally may not satisfy these bound constraints.
In the present work, we develop an approach to ensuring positivity
or other bound constraints using Runge--Kutta methods (RKMs) for the
solution of ODEs or semi-discretized PDEs.

We say an initial value problem
\begin{align} \label{ode}
    u'(t) & = f(t,u) \\
    u(0) & = u_0
\end{align}
where $u\colon [0,T] \to \R^m$ is positive if
\begin{align} \label{continuous-positivity}
    u(0)\ge 0 \implies u(t) \ge 0 \text{ for all } t \in [0,T].
\end{align}
Here and in the following, inequalities like $u \geq 0$ are meant componentwise.
A sufficient condition for positivity of \eqref{ode} is
\begin{equation} \label{eq:condition_ODE_pos}
u_i=0 \implies f_i(t,[u_1,\cdots,u_i,\cdots,u_n]^T) \geq 0 \quad \forall {u \geq 0,}\; \ \  \forall {t\in[0,T]}.
\end{equation}
For such ODEs, the backward Euler method is guaranteed to preserve positivity
under any step size, while the forward Euler will preserve positivity for small enough $\dt$ \cite{hundsdorfer_numerical_2003}.
Any RKM (or in fact any general linear method) that is unconditionally positivity preserving for all positive ODEs
must have order $\le 1$ \cite{bolley_conservation_1978}.
For any higher order method, we expect positivity only under some restriction
on the time step size.

Several approaches to ensuring numerical positivity exist in the literature.
The most basic approach is orthogonal projection onto the positive orthant,
which means simply setting negative values to zero \cite{shampine1986conservation}.  This approach is often problematic;
for instance, it will violate linear invariants such as mass conservation.
As another approach, one may use event finding methods in order to stop when any solution component
reaches zero, and then proceed in some special way
\cite{shampine_non-negative_2005}. This approach is implemented in the MATLAB
ODE Suite along with the idea of redefining the ODE outside the positive orthant (usually
by evaluating at the nearest point on the boundary of the positive orthant).
If positivity is preserved under a forward Euler step (with
some step size restriction $\dt \le \dt_\text{FE}$), then any strong stability preserving Runge--Kutta (SSPRK)
method will also preserve positivity (with a modified step size restriction)~\cite{gottlieb_strong_2011}.
Specifically, the positivity of the method is ensured for time steps
$\dt \leq {\mathcal C} \dt_\text{FE}$, where ${\mathcal C}$ depends on the SSPRK method.
Modified Patankar--Runge--Kutta (MPRK) methods represent another approach to ensuring
positivity for specific classes of ODEs. MPRK methods introduce multiplicative
factors within the Runge--Kutta stages to ensure positivity, but require the solution
of a linear algebraic system; see e.g.\ \cite{kopecz_comparison_2019} and references therein.
Finally, we mention diagonally split Runge--Kutta (DSRK) methods, which can be unconditionally
positive and have order higher than one. Like MPRK schemes, DSRK methods avoid
the restriction mentioned above because
they are not general linear methods \cite{horvath_positivity_1998}.  However, in practice
unconditionally positive DSRK methods are less accurate than backward Euler for
large step sizes \cite{macdonald2007}.

The rather discouraging theoretical result of \cite{bolley_conservation_1978} shows that one
should not hope to preserve positivity with a single method for every problem and every
initial condition.  In the present work we take an approach based on the idea that for
a particular problem and initial condition, there often exists a method of high order
that is positivity preserving, at least for a single step.
The main idea is to adaptively choose the weights $b$ of the RKM, after
the stage values are known, in a way that ensures positivity.
The selection of the weights requires the solution of a linear program (LP) at every
step for which the numerical solution would otherwise be non-positive.
This is a significant cost, but may in some cases be an economical alternative to rejecting
a step or using excessively small step sizes.

The idea of using different weights within an RKM is not new; for instance it is
the basis of error approximation using embedded RK pairs \cite{hairer_solving_1993}.
The idea of adapting the weights after calculating the stage values has also been used,
for instance in \cite{ketcheson_spatially_2013}.  In this case it is used to
adapt the properties of the time integrator for a method of lines solution of a
PDE.  Another class of methods that adapt the weights at the end of an RK step
are the relaxation Runge--Kutta (RRK) methods.  In these, the weights are
scaled by a scalar relaxation parameter in order to guarantee conservation or
monotonicity of a desired functional; e.g.\ to conserve or dissipate energy or
entropy
\cite{ketcheson_relaxation_2019,ranocha_relaxation_2019,ranocha2020general}.

Our means to ensure positivity can be interpreted as a projection
approach, where the numerical solution is adapted to satisfy the
positivity constraint at the end of each step. In contrast to
simple orthogonal projection, which has also been proposed to deal
with positivity constraints \cite{shampine1986conservation},
our approach preserves all linear invariants of the given ODE. These
invariants can be very important, e.g.\ the total mass for a transport
problem or in reaction systems. Preservation of linear
invariants has been shown to be an important advantage of RRK methods
over orthogonal projection methods
\cite{ranocha2020relaxationHamiltonian}. Of course, it is also
possible to enforce the preservation of linear invariants in projection
methods, but the invariants have to be known explicitly
\cite{sandu2001positive}.

The paper unfolds as follows. In Section~\ref{sec:main_idea} the main idea is
explained. Section~\ref{sec:LP} contains the formulation of the linear program
for selection of the weights at each step.
Section~\ref{sec:integration} describes how the new approach can be used with
different RKMs, how it can be combined with adaptive error control, and how the
region of absolute stability can be approximated.
In Section~\ref{sec:Numeric_Results} numerical results are given for multiple test problems.
A conclusion is given in Section~\ref{sec:conclusion}.

\section{Bound-preserving adaptive Runge--Kutta methods}\label{sec:main_idea}

When computing the solution of an ODE $u ' = f(t,u) $ using an RKM with $s$ stages and the Butcher tableau
\begin{align}
\renewcommand{\arraystretch}{1.2}
\begin{array}{c|c}
c &  A \\
\hline
 & b^T\\
\end{array}
\end{align}
the stage values are computed according to
\begin{equation}\label{eq:stagevalues}
y_j =  u^n + \dt \sum_{k = 1}^{s} a_{jk} f(t^n + \dt c_k,y_k),  \quad j = 1,\cdots,s.
\end{equation}
Based on these values, the next solution $u^{n+1}$ is computed as
\begin{equation} \label{eq:rkstep}
u^{n+1} = u^n + \dt \sum_{j  = 1}^s f(t^n + \dt c_j,y_j) b_j .
\end{equation}
Let $f_j = f(t^n + \dt c_j,y_j)$; then we can write \eqref{eq:rkstep} as
\begin{equation}\label{eq:Combination}
u^{n+1} = u^n + \dt F b,
\end{equation}
where the $j$th column of $F$ is $f_j$.
We wish to impose the discrete analog of \eqref{continuous-positivity}; i.e.\
\begin{align} \label{positivity}
    u^n\ge 0 \implies u^{n+1} \ge 0,
\end{align}
or more general bound constraints
\begin{align}
    \alpha \le u^n\le \beta \implies \alpha \le u^{n+1} \le \beta.
\end{align}
We will focus on the case of positivity while keeping in mind that
the methodology extends to general bounds.
The main idea of the present work is that if the new solution $u^{n+1}$ contains
negative entries, we can replace the weights in \eqref{eq:Combination} with
a set of modified weights $\bt$ such that the resulting solution is positive:
\begin{equation}\label{eq:ut}
\ut^{n+1} = u^n + \dt F \bt^n \ge 0.
\end{equation}
Indeed, we can view \eqref{eq:ut} as a linear constraint on the choice of
modified weights $\bt^n$.  Since we have already computed the intermediate stages,
$F$ is a known, fixed matrix.  In order to ensure that the modified solution $\ut^{n+1}$
is accurate, we can also constrain $\bt$ to satisfy the Runge--Kutta order conditions
up to some order (ideally, the same order as the original method).  Observe that
all of the order conditions are linear in the weights, so that these additional constraints
take the form
$$
Q\bt = r
$$
for some fixed matrix $Q$ and vector $r$.
By applying this technique at each step, we integrate \eqref{ode} with a
sequence of Runge--Kutta methods with coefficients $(A,\bt^n)$.  At any step
for which the solution $u^{n+1}$ produced by method $(A,b)$ is positive, we do not
need to modify the weights and can simply accept this unmodified solution.
Note that linear invariants (such as mass conservation) of the solution are
automatically preserved in this approach, since at each step we use a Runge--Kutta
method.

\subsection{Example I}\label{sec:example_lin}

The main goal is to choose a method $(A,b)$ such that $u^{n+1}$
approximates the solution of the ODE $u(t_{n+1})$.
An obvious objective while modifying the Runge--Kutta coefficients
is to retain a high order of accuracy, but this does not fully
determine the choice of weights in general.
To get a better understanding for the method we consider the
behavior for a simple problem.

We take the linear, positivity preserving ODE
\cite{kopecz_unconditionally_2018}
\begin{equation}
  u'(t) = L u(t),
  \quad
  u(0) = \begin{pmatrix} 1 \\ 0 \end{pmatrix},
  \qquad
  L = \begin{pmatrix} -5 & 1 \\ 5 & -1\end{pmatrix},
\end{equation}
and use the three stage, third order SSP method SSP(3,3)
\begin{align}
\begin{array}{c|ccc}
0 &  &  & \\
1 & 1 &  & \\
\nicefrac{1}{2} & \nicefrac{1}{4} & \nicefrac{1}{4} & \\
\hline
 & \nicefrac{1}{6} & \nicefrac{1}{6} & \nicefrac{2}{3}\\
\end{array}
\end{align}
of \cite{shu1988efficient}.
The matrix $L$ has the eigenvalues zero and $-6$ and its operator
norm is $2 \sqrt{13}$.
The real-axis stability interval of SSP33 includes the interval $[-2.5,0]$.
We take $\dt = \nicefrac{1}{3}$, which satisfies the spectral
condition and guarantees boundedness (though not monotonicity)
of the solution.
The corresponding stage derivatives are
\begin{equation}
  f(y_1) = \begin{pmatrix} -5 \\ 5 \end{pmatrix},\quad
  f(y_2) = \begin{pmatrix} 5 \\ -5 \end{pmatrix},\quad
  f(y_3) = \begin{pmatrix} -5 \\ 5 \end{pmatrix}.
\end{equation}
The value of the next step using the standard weights is
\begin{equation}
  u^1 = \begin{pmatrix} \nicefrac{-1}{9} \\ \nicefrac{10}{9} \end{pmatrix}.
\end{equation}
Since the first component of the new solution is negative,
we want to adapt the weights to ensure positivity.
All weights that comply with the constraints for first and second
order of accuracy can be expressed as
\begin{equation}
  \bt =
  \begin{pmatrix}
    \nicefrac{1}{6} \\
    \nicefrac{1}{6} \\
    \nicefrac{2}{3}
  \end{pmatrix}
  + \alpha \begin{pmatrix}
    \nicefrac{1}{2} \\
    \nicefrac{1}{2} \\
    -1
  \end{pmatrix},
  \qquad
  \alpha \in \R.
\end{equation}
We have one degree of freedom for the choice of the weights,
parameterized by $\alpha$.
If the general expression for the weights is inserted in
\eqref{eq:Combination} the general solution is
\begin{equation}
  u^{1}
  =
  u^0 + \dt \left(f_1, f_2, f_3\right) \bt
  =
  \begin{pmatrix}
    \nicefrac{-1}{9} \\
    \nicefrac{10}{9}
  \end{pmatrix}
  +\alpha \begin{pmatrix}
    5 \\
    -5
  \end{pmatrix}.
\end{equation}
By changing the parameter $\alpha$, the weights and the new
solution are altered. With a suitable choice of
$\alpha \in \left[ \nicefrac{1}{45}, \nicefrac{2}{9} \right]$,
any $u$ that complies with mass conservation and positivity can
be reached.
By adding additional constraints on the weights, the choice of
$\alpha$ can be narrowed down.
An objective function is also needed to make the choice unique. This should be
designed in a way to prefer weights that are close to the original weights.

We see that the choice of $\bt$ is subject to linear equality and inequality
constraints.  If we choose a linear objective function, the resulting problem
for finding the modified weights is a linear program, which can be efficiently
solved by standard algorithms.  A natural choice of objective function is
$$
\text{minimize } \|\bt - b\|_1.
$$
The resulting problem can be phrased as an LP by using slack variables.
In general, this LP may not have a solution; we can relax the constraints
by requiring a lower order of consistency than the design order of the
method.  These choices and alternatives will be considered in Section~\ref{sec:LP}.

In contrast to other projection methods
\cite{shampine1986conservation,sandu2001positive},
minimizing the deviation of the weights instead of the deviation
of the projected solution is computationally much more efficient
for large systems, arising for example in the discretization of
PDEs.

\subsection{Example II}\label{sec:example_reac}

To illustrate the usage of the method we consider the reaction system
\cite{kopecz_comparison_2019}
\begin{subequations}
\label{eq:Reaction}
\begin{align}
u_1' &= 0.01u_2 + 0.01 u_3 +0.003u_4 - \frac{u_1 u_2}{0.01+u_1}, \\
u_2' &= \frac{u_1u_2}{0.01+u_1}-0.01 u_2-0.5(1-\exp(-1.21 u_2^2)) u_3 -0.05 u_2, \\
u_3' &= 0.5(1-\exp(-1.21u_2^2)) u_3 - 0.01 u_3 -0.02 u_3, \\
u_4' &=0.05 u_2 + 0.02 u_3 + 0.003u_4,
\end{align}
\end{subequations}
with initial conditions
\begin{equation}
u(0) = (8,2,1,4)^T.
\end{equation}
Note that we wrote \eqref{eq:Reaction} as in
\cite{kopecz_comparison_2019}, sometimes using multiple terms containing the same
variables but with different constants, e.g.\ $-0.01 u_2 -0.05 u_2$ in the
time derivative of $u_2$. This notation is useful to see the structure of
a production-destruction system which is exploited for positivity-preserving
(modified) Patankar--Runge--Kutta methods as in \cite{kopecz_comparison_2019}.
We will use the same notation also later in this article.

Using the Cash--Karp RK5 method \cite{cash1990variable} and $\dt = 0.005$ to
solve \eqref{eq:Reaction}, the approximated solution contains negative values.
This causes qualitatively wrong solutions to the problem.
In Figure\,\ref{fig:exampleI} the obtained results are plotted with dashed lines.
At $t=1.905$ the value of $u_1$ gets negative. This leads to a diverging solution.

Now the weights are adapted. The adapted weights are of 4th order. The results are also plotted in Figure\,\ref{fig:exampleI}, with solid lines.
The positivity constraint is now fulfilled. A qualitatively correct solution is obtained.

\begin{figure}[ht]
    \centering
    \includegraphics[width=0.75\textwidth]{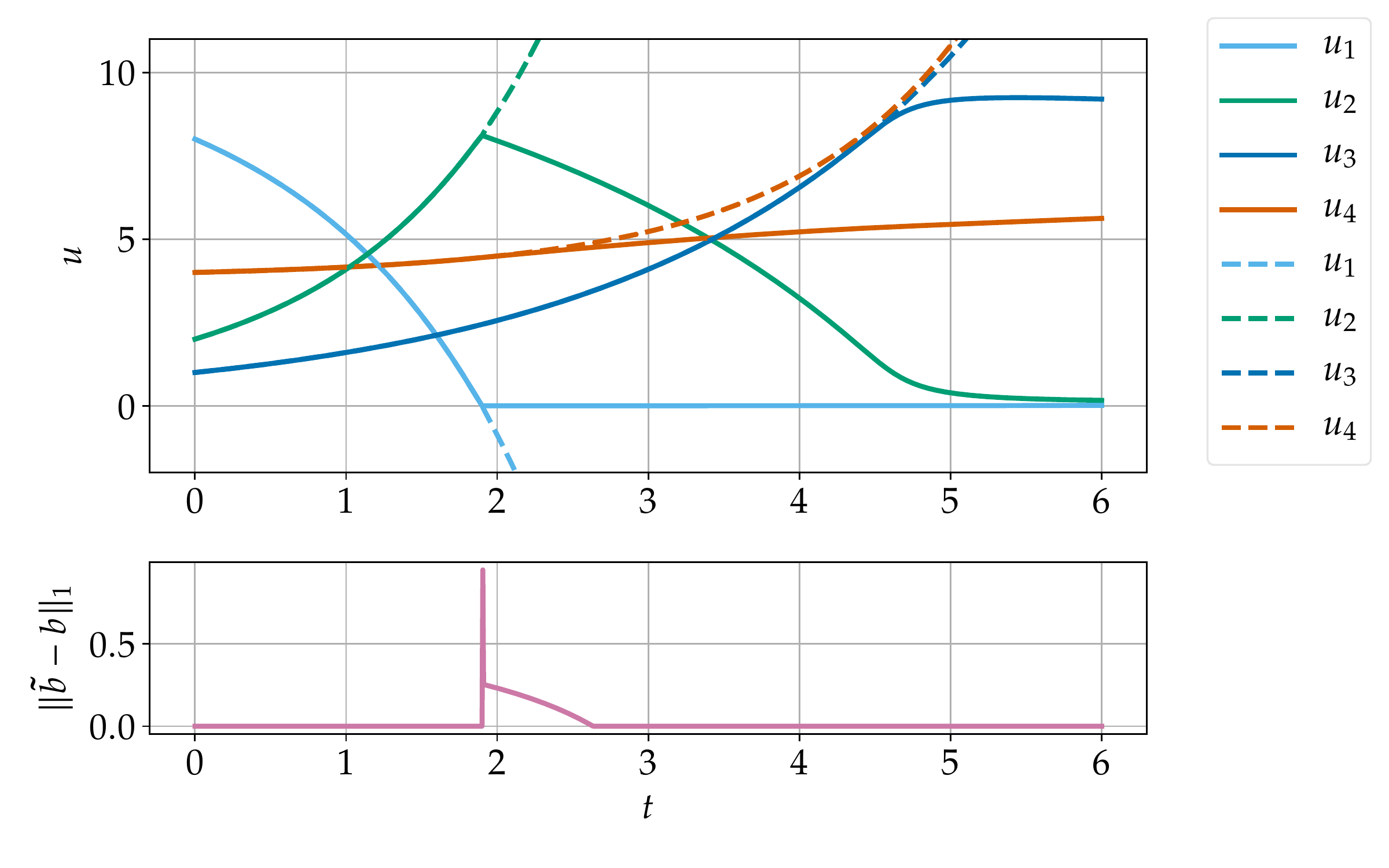}
    \caption{Numerical approximation of the reaction problem \eqref{eq:Reaction} computed with Cash--Karp RK5 and $\dt = 0.005$. The dashed lines are the approximations obtained without the adaption of the weights. The lower plot shows the adaptation of the weights. }
    \label{fig:exampleI}
\end{figure}

The difference $\|\bt-b\|_1$ is also plotted in Figure\,\ref{fig:exampleI}.
No modification of the weights is required for $t<1.905$. At $t=1.905$ the
weights are first adapted to ensure the positivity of the solution. For
$t>2.63$ the original set of weights again lead to a positive solution,
and no further modification is necessary.

\section{Selection of modified weights}\label{sec:LP}

In this section, we consider further the formulation of the LP to choose
the modified weights $\bt$.  In particular, we focus on the choice
of objective function and how to relax the constraints to ensure that
a feasible solution exists.

\subsection{Order conditions}\label{sec:OrderCond}

The order conditions for an $s$-stage, order $p$ RKM are a set of equations depending on $A$, $b$, and
$c$.  As mentioned already, if $A$ and $c$ are given, the order conditions are
linear in $b$ and can be written as $Q_p b = r_p$, where
$Q_p\in{\mathbb R}^{v\times s}, r_p\in{\mathbb R}^v$ represent the set of all conditions up to
and including order $p$. Here $v$ is the number of order conditions. It may not be possible to find modified weights
that also satisfy the conditions of order $p$ and yield positivity, so
in general the modified weights will be a solution of
$$
  Q_\pt \bt = r_\pt
$$
for some $\pt \le p$.   Since we have $s$ degrees of freedom $\bt_j$,  we need
at a minimum to choose $\pt$ so that $\mathrm{rank}(Q_\pt) < s$.
Because the quadrature conditions are linearly independent, we have $\mathrm{rank}(Q_p)\ge p$,
so we must take $\pt \le s$.  In general we may need to take $\pt$ even smaller
in order to achieve positivity.

\subsection{Choice of objective function and additional constraints}
In the design of Runge--Kutta methods, weights are carefully chosen
not only to satisfy the order conditions but also
to give desirable properties such as a good region of absolute
stability, small error coefficients, and so forth.
Replacing these carefully-chosen weights $b$ with arbitrary weights
$\bt$ could lead to the loss of these desirable properties.
In order to preserve as much as possible the good properties of the method, we use as
objective function $\|\bt - b\|_1$.  This has the additional benefit
of penalizing weights with large magnitude in general, avoiding
large truncation or cancellation errors.  This also ensures that if
no negative solution values appear, the solution of the LP is
simply the original method weights.  Thus we have the following LP:

\LPblock{
(Free adaptation) Given $F$, $\pt$, and $b$, find $\bt$ that minimizes $\|\bt - b\|_1$ subject to
\begin{subequations}
\label{eq:free-adaptation}
\begin{align}
u^{n+1}&=u^n+\dt F \bt \geq 0, \label{eq:direct_pos}\\
Q_\pt\bt &=r_\pt \label{eq:direct_Order}.
\end{align}
\end{subequations}
}

Of course, there is still no guarantee that the modified weights
will be close to the original method weights.  In some examples
we have observed that large modifications of the weights can lead
to inaccurate solutions even though the order conditions are satisfied.
In order to avoid issues that might be caused by poor weights, we can
 additionally use either or both of the following ideas:

\begin{itemize}
    \item Convex adaptation:
      Select in advance a set of desirable weight vectors $b^1, b^2, \dots, b^K$
      corresponding to known good methods,
      and restrict the choice of $\bt$ to convex combinations of this set.
    \item Stepsize control:
      Require that the perturbation $\|\tilde u - u\|$ is small and reject
      the step if it is not.
\end{itemize}

We discuss the first idea here; the second is deferred to section\,\ref{sec:error}.
Ideally every element of the set of potential weight vectors would correspond to
a method of the same order as the original method.   Due to linearity of the
order conditions, any linear combination of such weights would also yield a
method of the same order.  On the other hand,
it is natural to include a weight vector corresponding to the forward Euler method
(for explicit methods) or backward Euler method (for implicit methods), since
these two methods guarantee positivity (unconditionally for backward Euler
and conditionally for forward Euler).
We can formulate an LP using the approach of convex adaptation as follows.
Let $B$ denote the matrix with columns $b^1, b^2, \dots, b^K$ and let
$g\in \R^K$.  The LP is then as follows:

\LPblock{
(Convex adaptation) Given $F$, $B$, and $b$, find $g$ that minimizes $\|\bt - b\|_1$ subject to
\begin{subequations}
\begin{align}
\bt & = Bg, \\
0 & \le g_k \le 1, \\
\sum_{k=1}^K g_k &= 1, \\
u^{n+1}&=u^n+\dt F \bt \geq 0.
\end{align}
\end{subequations}
}

Note that we do not need to impose the order conditions here, since they will be
satisfied by each of the methods and thus (by linearity) by the modified
method.  The order of the modified method will in general be equal to the lowest
order among the component methods.

Both approaches are illustrated in Figure\,\ref{fig:b_space}.

\begin{figure}
    \centering
    \begin{subfigure}[b]{0.45\textwidth}
        \centering
        \begin{tikzpicture}
	\coordinate (borig) at (1.2,1.8);
    \coordinate (b) at (2,1);
	\coordinate (db) at ($(b)-(borig)$);

    \draw [<->,thick] (0,2.5) node (yaxis) [above] {$b_2$}
        |- (3,0) node (xaxis) [right] {$b_1$};

	\draw[thick,red,dotted] ($-0.9*(db)+(borig)$) -- ($0.9*(db)+(b)$);
	\draw[thick,red] ($-0.6*(db)+(borig)$) -- ($0.6*(db)+(b)$);
	
	\draw[thick,->] (borig) -- node[anchor=south west ,pos = 0.5] {$\Delta b$} (b);
	\draw[thick,->] (0,0) -- node[anchor=south east,pos = 0.8] {$b$} (borig);
    \draw[thick,->] (0,0) -- node[anchor=north west,pos = 0.8] {$\tilde{b}$} (b);

\end{tikzpicture}
        \caption{Free adaptation}
        \label{fig:b_direct}
    \end{subfigure}
    \begin{subfigure}[b]{0.45\textwidth}
        \centering
        \begin{tikzpicture}

	\coordinate (bo) at (1.2,1.8);
    \coordinate (b) at (2,1);
    \coordinate (db) at ($(b)-(bo)$);
    \coordinate (b1) at ($0.4*(db)+(b)$);
    \coordinate (b0) at ($-0.2*(db)+(bo)$);;

    \draw [<->,thick] (0,2.5) node (yaxis) [above] {$b_2$}
        |- (3,0) node (xaxis) [right] {$b_1$};

   \node[circle,inner sep=1.5pt,draw,fill,red] (A) at (b0) {};
   \node[circle,inner sep=1.5pt,draw,fill,red] (B) at (b1) {};
   \draw[thick,red,-] (b0) -- (b1);

	\draw[thick,->] (0,0) -- node[anchor=south east,pos = 0.8] {$b^1$} (b0);
    \draw[thick,->] (0,0) -- node[anchor=south east,pos = 0.8] {$\tilde{b}$} (b);
    \draw[thick,->] (0,0) -- node[anchor=north west,pos = 0.8] {$b^2$} (b1);

\end{tikzpicture}
        \caption{Convex adaptation}
        \label{fig:b_convex}
    \end{subfigure}
    \caption{Graphical representation of the two different approaches to adapt the weights for
    a two-stage method.}\label{fig:b_space}
\end{figure}
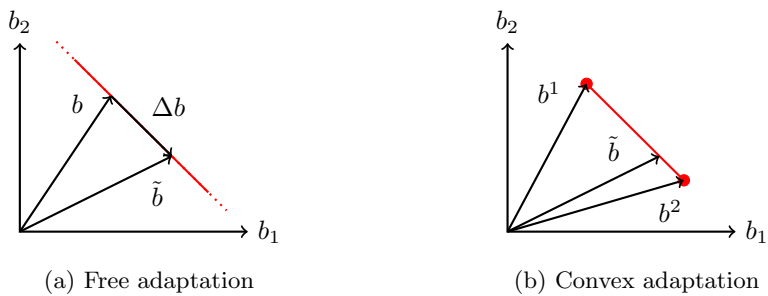

\subsection{Reduction of number of positivity constraints}
The number of positivity constraints implied by \eqref{eq:ut}
is equal to $m$, the number of ODEs being solved.  This number
may be very large, for instance if the system is a semi-discretization
of a PDE.  This makes solution of the LP very costly.
But in most cases, positivity is violated only for a very small subset $h \subseteq \{1, \dots, m\}$ of
the solution components.  We can solve a much less expensive LP
by replacing \eqref{eq:ut} with
\begin{equation}
u_i^n + \dt \sum_{j=0}^s F_{i,j}  b_j  \geq 0 \quad \forall i \in h \subseteq \{1,\dots,m \}.
\end{equation}
Of course, it must be checked that the solution of the resulting LP still
satisfies the full set of constraints \eqref{eq:ut}.  In practice, we
have found the following approach to be effective.
First, set
$$h_0 = \{ i \in \{1,\dots,m \} \ | \  u_i^{n+1}  < 0 \}.$$
Solve the LP and let $\ut^{n+1}$ denote the new solution.
If $\ut^{n+1}$ satisfies \eqref{eq:ut}, accept this as the new
solution; otherwise, repeatedly take
$$h_{a+1} = \{ i \in \{1,\dots,m \}|  \ut_i^{n+1}  < 0 \} \cup h_{a}$$
until $\ut^{n+1}$ is found to satisfy \eqref{eq:ut}.
In the examples we have studied, this approach was found to always converge in
at most 2 iterations.

When enforcing a maximum value, the number of constraints can be reduced using the same technique. When enforcing both maximum and minimum values two separate sets of active constraints are used.
In this case it is important to update these sets simultaneously.

\subsection{Summary of the algorithm}

Our proposed method to solve a positive initial value problem \eqref{ode}
is summarized in Algorithm\,\ref{alg:Adaption}.

\begin{algorithm}[ht]
\begin{algorithmic}[1]
\State Initialize $n \leftarrow 0$, $u^n \leftarrow u_0$, $t \leftarrow 0$
\While{$t<t_{end}$}
\State Choose $\Delta t$ (fixed or via an adaptive stepsize control)
\State Calculate $F = (f_1, \dots, f_s)$ according to \eqref{eq:stagevalues} \label{line:calc_stge}
\State $u^{n+1} \leftarrow u^n + \Delta t Fb$, according to \eqref{eq:Combination}
\If {$u^{n+1} \geq 0$}
	\State  \textbf{GOTO} line\,\ref{line:update}
\Else
	\State{$\pt \leftarrow p_{start}$}
	\While{$\pt \geq p_{min}$}
		\State Solve LP \eqref{eq:free-adaptation}
		\If {LP is feasible}
			\State $\delta \leftarrow \| \Delta t F(\bt-b)\|$
			\If {$\delta < tol_\delta$}
				\State $u^{n+1} \leftarrow u^n+\Delta t F \tilde b$
				\State  \textbf{GOTO} line\,\ref{line:update}
			\EndIf
		\EndIf
		\State $\pt \leftarrow \pt - 1$
	\EndWhile
	\State Reduce $\dt$
	\State \textbf{GOTO} line\,\ref{line:calc_stge}
\EndIf
\State Estimate $error$ according to \eqref{eq:Err} \label{line:update}
\If {$error \leq tol_{error}$}
	\State $t \leftarrow t + \dt$, $n \leftarrow n+1$
\Else
	\State Reduce $\dt$
	\State \textbf{GOTO} line\,\ref{line:calc_stge}
\EndIf
\EndWhile
\end{algorithmic}
\caption{Pseudocode for the algorithm using a free adaption of weights.}
\label{alg:Adaption}
\end{algorithm}

\section{Properties of adaptive RKMs}

In the previous sections an algorithm for choosing positivity preserving weights $\bt$ has been presented.
In the next section properties of the adaptive RKMs are discussed.

\subsection{Choice of baseline method} \label{sec:integration}
An important property of the baseline method is the existence of embedded methods and the degrees of freedom for the weights $\bt$.
As noted in Section~\ref{sec:OrderCond} the number of stages has to be higher than the order.
It is natural to use explicit and diagonally implicit methods, both for their efficiency and because
the order need not be reduced as much in order to satisfy the condition $\pt<s$.
For a given method and reduced order $\pt$, the number of degrees of freedom for the choice of the new weights
is given by $s-\mathrm{rank}(Q_{\pt})$. 
The resulting number of degrees of freedom is shown in Table\,\ref{table:DOF_exp}
for some explicit methods and in Table\,\ref{table:DOF_imp} for several implicit
methods. The backward Euler extrapolation methods use the harmonic sequence
as described in \cite[Section~II.9]{hairer_solving_1993} and
\cite[Section~IV.9]{hairer_solving_1996}.

\begin{table}[h!]
\centering    
  \begin{tabular*}{\linewidth}{@{\extracolsep{\fill}}lr*6c@{}}
    \toprule
    Method & $s$ & \multicolumn{6}{c}{Order $\tilde p$} \\
    & & 1 & 2 & 3 & 4 & 5 & 6 \\
    \midrule
    Classical RK4 \cite{kutta1901beitrag} & 4 & 3 & 2 & 0 & 0 & --- & --- \\
    SSPRK(10,4) \cite{ketcheson2008highly} & 10&9&8&6&4& --- & ---\\
    Cash--Karp RK5(4)6 \cite{cash1990variable} & 6&5&4&2&1&0& --- \\
    Dormand--Prince RK5(4)7 \cite{prince1981high}& 7&6&5&3&1&0& --- \\
    \bottomrule
  \end{tabular*}
  \caption{Degrees of freedom for the choice of the weights for some explicit methods.} 
  \label{table:DOF_exp}
\end{table}

\begin{table}[h!]
\centering   
   \begin{tabular*}{\linewidth}{@{\extracolsep{\fill}}lr*6c@{}}
    \toprule
    Method & $s$ & \multicolumn{6}{c}{Order $\tilde p$} \\
    & & 1 & 2 & 3 & 4 & 5 & 6 \\
    \midrule
    Backward Euler& 1&0& --- & --- & --- & --- & ---  \\
    Lobatto~IIIC4 \cite{chipman1971stable} & 4&3&2&1&0&0&0 \\
    Radau~IIA3 \cite{ehle1969pade} & 3&2&1&0&0&0& ---  \\
    SDIRK(5,4) \cite[eq. (6.18)]{hairer_solving_1996}& 5&4&3&1&0& --- & ---  \\
    TR-BDF2 \cite{bank1985transient} & 3&2&1& --- & --- & --- & ---  \\
    Extrapolation BE~2 \cite[Sec.~II.9]{hairer_solving_1993} & 3&2&1& --- & --- & --- & ---  \\
    Extrapolation BE~3 \cite[Sec.~II.9]{hairer_solving_1993} & 6&5&4&2& --- & --- & ---  \\
    Extrapolation BE~4 \cite[Sec.~II.9]{hairer_solving_1993} & 10&9&8&6&3& --- & ---  \\
    \bottomrule
  \end{tabular*}
  \caption{Degrees of freedom for the choice of the weights for some implicit methods.} 
  \label{table:DOF_imp}
\end{table}

For explicit methods with the number of stages equal to the order of the
method, the order must be reduced in order to allow any freedom in the weights.
If the classical RK4 method is used the order has to be reduced more because
the RK4 method does not have embedded methods of order~3.
In contrast to this, some methods with $s > p$ admit changes to the weights without reducing the order.
An example of this is SSPRK(10,4), that has 4 degrees of freedom for $\pt = p$.
For Cash--Karp RK5 and Dormand--Prince RK5, even though the number of stages is higher than the order,
the order must be reduced in order to allow any modification of the weights.

Regarding implicit methods,
we can see that the fully implicit methods Lobatto~IIIC4 and Radau~IIA3 require a drastic reduction of the order, as expected.
The diagonally implicit SDIRK(5,4) method only requires an order reduction of one to get one degree of freedom for the weights.
The TR-BDF2 method even allows adaptations without reducing the order.
The backward Euler extrapolation methods also exhibit degrees of freedom without a reduction of the order.

It is also desirable that the baseline method have a large stability region.

Note that for many diagonally implicit methods, the first stage is a scaled backward Euler step.
For such methods, by allowing the order to be reduced to one
we can guarantee the existence of a solution to the LP, since the backward Euler method
is unconditionally positive.  For explicit methods, reducing the order to one is guaranteed
to yield a solution of the LP only if the step size is small enough.

\subsection{Error detection and approximation}\label{sec:error}
Stability analysis for the proposed approach is very challenging, since in principle
a different method may be used at every step.  At the same time, as long as the exact
solution is positive, we expect that as the step size goes to zero, eventually no modification
of the weights will be required and the convergence of the unmodified method will be observed.
This holds true in the examples shown in Section \ref{sec:Numeric_Results}.  We are thus more concerned with
the behavior of the modified method outside the asymptotic convergence regime.

To approximate the error of a new step we propose the following approximation of the local error:
\begin{align}
err = \|u(t^{n+1})-\tilde u^{n+1}\| &= \|u(t^{n+1}) - (u^{n+1}+\dt F(\bt-b))\| \\
 &\leq \underbrace{\|u(t^{n+1})-u^{n+1}\|}_{\approx err_T}+\underbrace{\|\dt F(\bt-b)\|}_{= \delta}. \label{eq:Err}
\end{align}

The total error is split up in the truncation error and the perturbation $\delta$ using the triangle inequality.
The truncation error can be estimated using the standard error estimators $err_T = \| u^{n}_{b} - u^{n}_{\hat{b}} \|$.
After adapting the weights, the perturbation is calculated. If the perturbation is larger than the tolerance, the weights are rejected.
The two values are added to get an approximation of the total error $err = err_T + \delta$.
This type of error estimation is easy to implement because it can be easily
incorporated in an existing step size control and takes advantage of the
standard error approximation.

\subsection{Stability region}

Adapting the weights $b$ changes the RK method. Hence, the stability
function is altered and the region of absolute stability varies.

As an example, the stability regions of adapted RKMs are visualized in
Figure\,\ref{fig:stab}.
In Figure\,\ref{fig:stab_dp5} the Dormand--Prince RK5 method is freely adapted.
The weights are taken from the example in Section~\ref{sec:Ex_expl}.
In Figure\,\ref{fig:stab_ex3} the stability regions of the BE~3
extrapolation method and the embedded chain of three BE steps with
time step $\dt/3$ are
plotted. Additionally the stability regions of convex combinations
of these two methods are shown.

\begin{figure}
     \centering
     \begin{subfigure}[b]{0.45\textwidth}
         \centering
         \includegraphics[width=\textwidth]{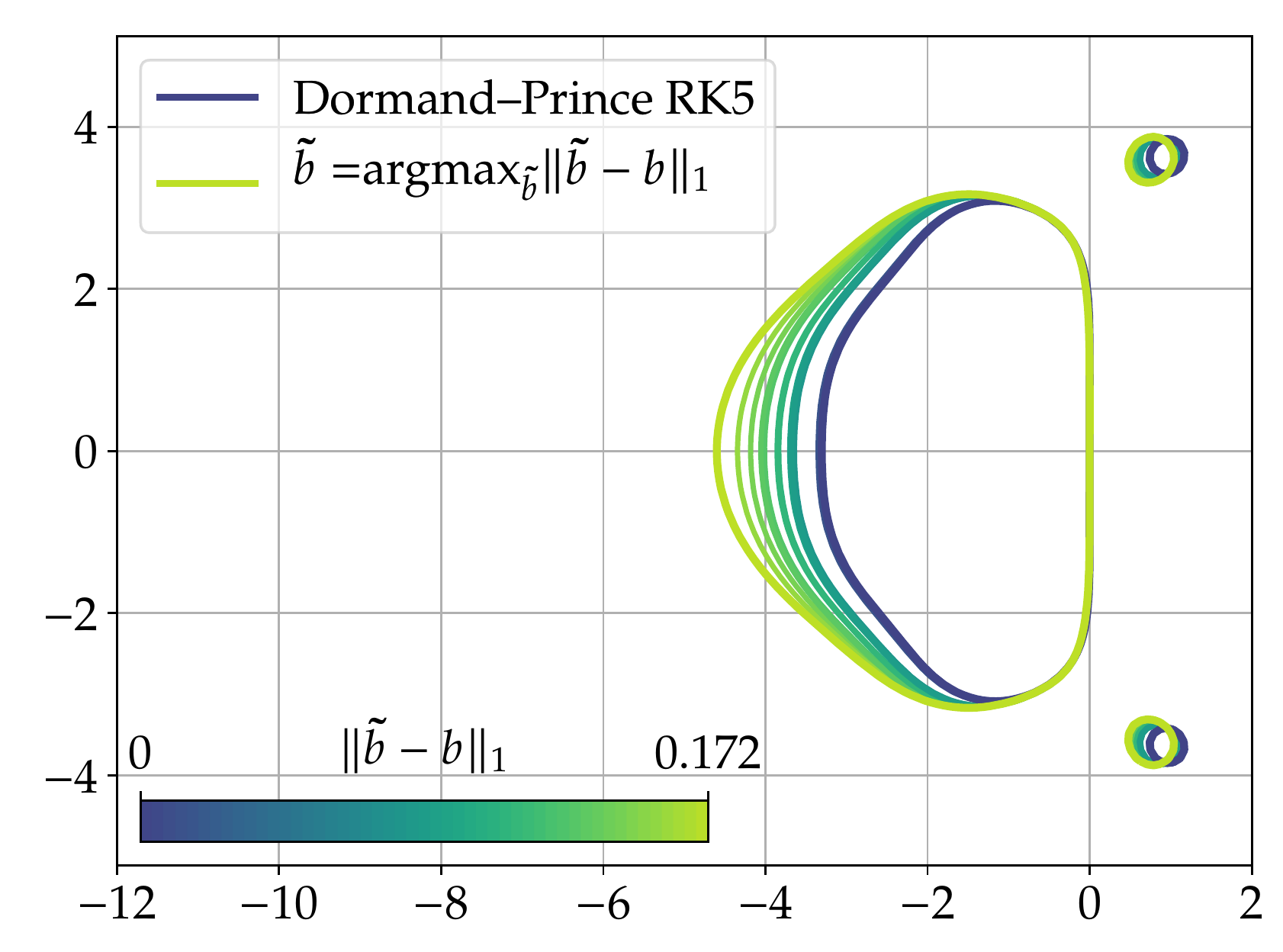}
         \caption{Dormand--Prince RK5 with free adaptation
                   of the weights $\bt$ as in the example in Figure~\ref{fig:weights_AdDe}.}
         \label{fig:stab_dp5}
     \end{subfigure}
     \hfill
     \begin{subfigure}[b]{0.45\textwidth}
         \centering
         \includegraphics[width=\textwidth]{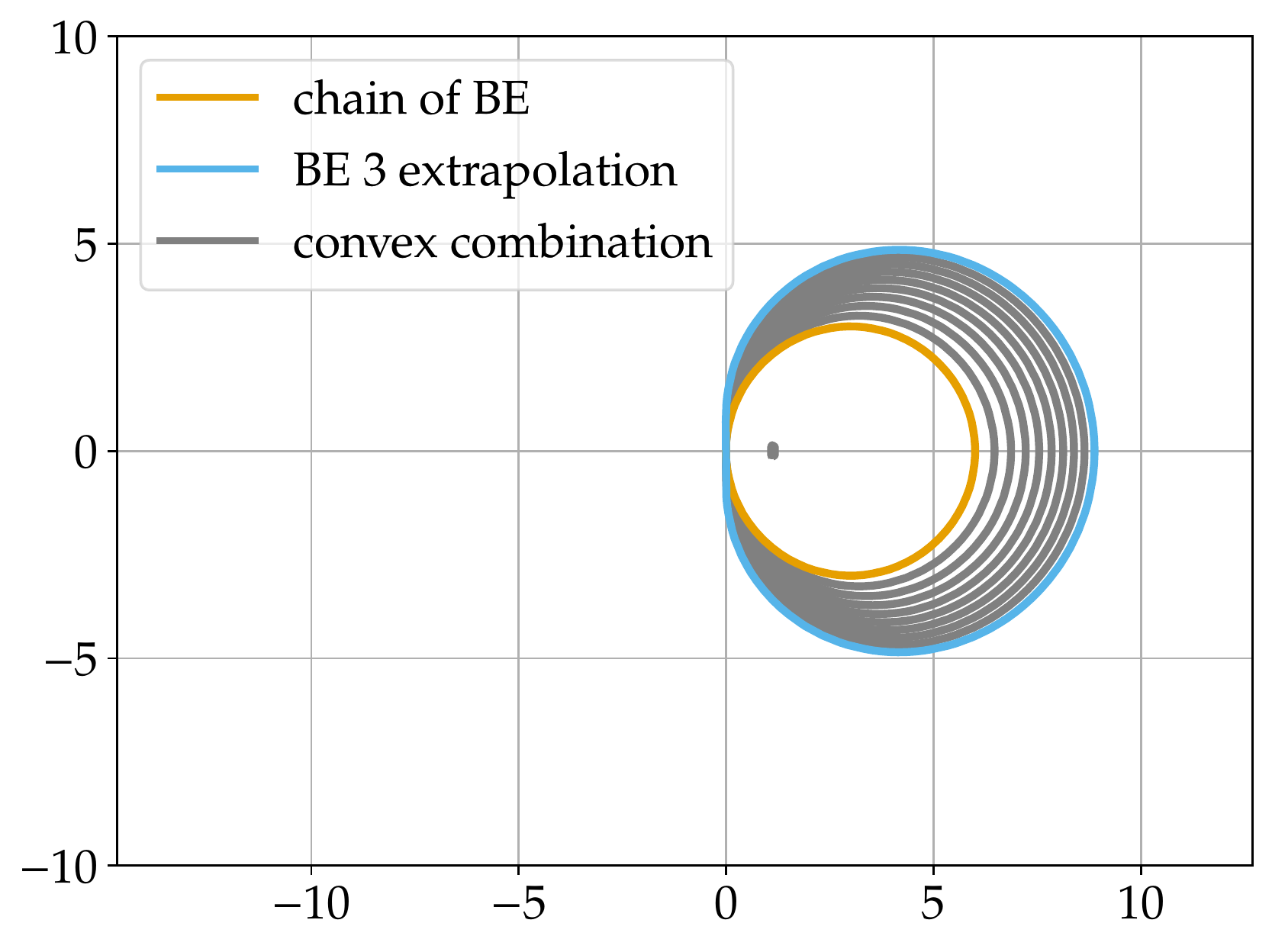}
         \caption{BE~3 extrapolation method and embedded chain
                  of three BE steps with convex combinations of both
                  weights.}
         \label{fig:stab_ex3}
     \end{subfigure}
        \caption{Change of stability region for the free adaptation and
                 the convex adaptation of the weights.
                 Regions with $|R(z)| \leq 1$ are hatched for the original methods.}
        \label{fig:stab}
\end{figure}

Let the stability function be denoted by $R(z): \CN \to \CN$.
Since we intend to vary the weights, we view $R(z)$
as a function parameterized by the weight vector $b$:
\begin{equation}
R_b(z) = 1 + zb^T(I - zA)^{-1}e,
\end{equation}
where $e = (1, \dots, 1)^T \in \R^s$.
The stability function is an affine function of the weights.

\subsubsection{Stability of convex adaptation}

If the new weights are chosen by convex adaptation of given weights,
it is easy to prove some properties of the stability region.
\begin{theorem} \label{thm:stability-region}
  The stability region of a Runge--Kutta method $(A,\bt)$ where
  $\bt = \sum_{i} g_i b^i$ is a convex combination of
  $b^1, \dots, b^m \in \R^s$ (i.e.\ $g_i \in [0,1]$, $\sum_i g_i = 1$),
  contains the intersection of the stability regions of
  the methods $(A,b^1), \dots, (A,b^m)$.
\end{theorem}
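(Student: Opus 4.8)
The plan is to exploit the affine structure of the stability function in the weights, which the excerpt has already established, together with the normalization $\sum_i g_i = 1$. The key observation is that although $R_b(z)$ is only affine (not linear) in $b$ because of the additive constant $1$, the convexity constraint causes the affine combination to collapse into a genuine convex combination of the individual stability functions. Once that is in hand, the statement follows from the triangle inequality.

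Concretely, first I would record that for each $i$ the nonconstant part of the stability function is linear in the weights, namely $R_{b^i}(z) - 1 = z (b^i)^{T}(I - zA)^{-1} e$. Substituting $\tilde{b} = \sum_i g_i b^i$ and pulling the scalars $g_i$ through the linear map $b \mapsto z b^{T}(I - zA)^{-1} e$ gives
\begin{equation}
R_{\tilde{b}}(z) = 1 + \sum_i g_i \bigl( R_{b^i}(z) - 1 \bigr) = 1 - \sum_i g_i + \sum_i g_i R_{b^i}(z).
\end{equation}
At this point I would invoke $\sum_i g_i = 1$ to cancel the constant terms, which leaves
\begin{equation}
R_{\tilde{b}}(z) = \sum_i g_i R_{b^i}(z),
\end{equation}
so that the stability function of the adapted method is exactly the convex combination of the stability functions of the component methods.

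The final step is then immediate. For any $z$ lying in the intersection of the stability regions we have $|R_{b^i}(z)| \le 1$ for every $i$, and since $g_i \ge 0$ with $\sum_i g_i = 1$, the triangle inequality yields
\begin{equation}
|R_{\tilde{b}}(z)| = \Bigl| \sum_i g_i R_{b^i}(z) \Bigr| \le \sum_i g_i |R_{b^i}(z)| \le \sum_i g_i = 1,
\end{equation}
so $z$ belongs to the stability region of $(A,\tilde{b})$, which is the desired inclusion.

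I do not anticipate a genuine obstacle here; the argument is short, and the only point requiring a little care is recognizing that the additive constant in $R_b(z)$ does not spoil the reduction to a convex combination. It is precisely the normalization $\sum_i g_i = 1$ that makes the constant terms recombine correctly. A minor technical remark worth including is that for $z$ in the intersection each $R_{b^i}(z)$ is finite, so $z$ avoids the poles of $(I - zA)^{-1}$; since these poles depend only on the shared matrix $A$ and not on the weights, the algebraic identity above is valid at every such $z$, and the proof goes through without qualification.
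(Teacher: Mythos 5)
Your proof is correct and follows essentially the same route as the paper: both use the affine dependence of $R_b(z)$ on $b$ together with $\sum_i g_i = 1$ to obtain $R_{\tilde{b}}(z) = \sum_i g_i R_{b^i}(z)$, and then conclude by the triangle inequality. You merely spell out more explicitly why the constant term does not obstruct the reduction to a convex combination, which the paper compresses into the phrase \enquote{affine-linear function of the weights}.
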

\begin{proof}
  Since the stability function is an affine-linear function of the
  weights, $R_{\sum_i g_i b^i}(z) = \sum_i g_i R_{b^i}(z)$. Hence,
  if $z$ is in the stability region of all methods $(A,b^1), \dots, (A,b^m)$,
  \begin{equation}
    | R_{\sum_i g_i b^i}(z) |
    \le
    \sum_i g_i | R_{b^i}(z) |
    \le
    1.
  \end{equation}
\end{proof}

This result is particularly important for implicit methods.
If all the embedded methods used to construct the new weights
are A-stable, the resulting method is also A-stable.

\subsubsection{Stability of free adaptation}
If the weights are adapted freely, in general we have no result like Theorem \ref{thm:stability-region}.
Still, if the change in the weights is small then the resulting stability
function is by some measure similar to the stability function of the baseline
method.

\begin{lemma}
  The stability function $R_{\bt}$ of an adapted RK method
  satisfies
  \begin{equation}
    | R_{\bt}(z) |
    \le
    | R_{b}(z) |
    + \| \bt - b \|_1 \| z (\I - z A)^{-1} e \|_\infty.
  \end{equation}
\end{lemma}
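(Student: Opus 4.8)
The statement to prove is the bound
\[
| R_{\bt}(z) | \le | R_{b}(z) | + \| \bt - b \|_1 \| z (\I - z A)^{-1} e \|_\infty.
\]
My plan is to start from the explicit formula for the stability function given just above the lemma, namely $R_b(z) = 1 + z b^T (\I - z A)^{-1} e$, and exploit the fact that $R$ depends \emph{affinely} on the weight vector. The key observation is that the constant term $1$ and the matrix factor $z(\I - z A)^{-1} e$ are independent of the weights, so the difference $R_{\bt}(z) - R_b(z)$ collapses to a single linear term in $\bt - b$.

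The first step is to subtract the two stability functions:
\[
R_{\bt}(z) - R_b(z) = z (\bt - b)^T (\I - z A)^{-1} e = (\bt - b)^T \big( z (\I - z A)^{-1} e \big).
\]
This is where the affine structure does all the work; no cancellation beyond this identity is needed. The second step is to bound the magnitude of this scalar. I would write it as an inner product between the vector $\bt - b$ and the vector $z(\I - z A)^{-1} e$, and apply the standard H\"older duality between the $1$-norm and the $\infty$-norm:
\[
\big| (\bt - b)^T \, z (\I - z A)^{-1} e \big| \le \| \bt - b \|_1 \, \| z (\I - z A)^{-1} e \|_\infty.
\]
The final step is the triangle inequality: from $R_{\bt}(z) = R_b(z) + \big(R_{\bt}(z) - R_b(z)\big)$ we get $|R_{\bt}(z)| \le |R_b(z)| + |R_{\bt}(z) - R_b(z)|$, and substituting the H\"older bound yields exactly the claimed inequality.

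I do not anticipate a genuine obstacle here; the result is essentially a one-line consequence of affinity plus H\"older. The only point that deserves a moment of care is the choice of dual norm pairing: one must match $\|\cdot\|_1$ on $\bt - b$ with $\|\cdot\|_\infty$ on the fixed vector $z(\I - z A)^{-1} e$, which is the correct H\"older conjugate pair, and which also happens to align with the $\ell_1$ objective $\|\bt - b\|_1$ used in the LP. A secondary detail is that $z$ is complex, so the entries of $z(\I - z A)^{-1} e$ and the scalar products are complex; the absolute values and norms should be understood as complex moduli, but H\"older's inequality holds verbatim over $\CN$, so nothing changes. One should also note implicitly that $(\I - z A)^{-1}$ exists for $z$ outside the (finite) set of reciprocal eigenvalues of $A$, which is where the stability function is defined in any case.
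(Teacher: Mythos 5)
Your proof is correct and follows exactly the same route as the paper: triangle inequality, the affine dependence of $R$ on the weights to reduce the difference to $z(\bt-b)^T(\I-zA)^{-1}e$, and the H\"older $\ell_1$--$\ell_\infty$ pairing. The remarks about complex moduli and invertibility of $\I - zA$ are sensible but not needed beyond what the paper implicitly assumes.
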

\begin{proof}
  Compute
  \begin{equation}
  \label{eq:proof-of-lemma-free-adaptation}
  \begin{aligned}
    | R_{\bt}(z) |
    &\le
    | R_{b}(z) |
    + | R_{\bt}(z) - R_{b}(z) |
    =
    | R_{b}(z) |
    + | z (\bt - b)^T (\I - z A)^{-1} e|
    \\
    &\le
    | R_{b}(z) |
    + \| \bt - b \|_1 \| z (\I - z A)^{-1} e \|_\infty.
  \end{aligned}
  \end{equation}
\end{proof}
This result suggests that the objective $\min \| \bt - b \|_1 $ is an
appropriate choice to control the change of the region of absolute stability,
in particular for explicit methods for which
$\| z (\I - z A)^{-1} e \|_\infty$ can be bounded by a polynomial
in $|z|$.

\section{Results of numerical experiments}\label{sec:Numeric_Results}

The implementation of the algorithms described above and code to
reproduce the numerical examples reported here can be found in
\cite{nusslein2020positivityRepro}.
The methods are implemented in Python using NumPy/SciPy
\cite{virtanen2020scipy}, NodePy \cite{ketcheson2020NodePy}, and
Matplotlib \cite{hunter2007matplotlib} for the visualizations.
We have used MOSEK \cite{mosek} via CVXPY to solve the LPs
\cite{cvxpy, cvxpy_rewriting}.

The adaptive RKM can be used with ODEs that satisfy \eqref{eq:condition_ODE_pos}.
For problems where the exact solution is positive for certain $u_0$ but do not satisfy \eqref{eq:condition_ODE_pos} tests did not show promising results. Additionally, it is not certain whether the computed solutions would be reasonable.

\subsection{Non-stiff problem with fixed stepsize}\label{sec:Ex_expl}
First, adaptive RKMs based on explicit methods are tested on non-stiff problems with a fixed step size.
When used with explicit methods the cost of solving the LP is significant because the computation of the stage derivatives only requires $s$ evaluations of the right-hand side (RHS).
For most of the linear test problems tried, the explicit methods yield to positive results.
When increasing the step size, issues with stability occur before getting negative values.
An example for this is the ODE in Section~\ref{sec:example_lin}.
Some nonlinear RHS may require very small time steps to preserve positivity.
For these, adapting the weights could be a possible way to solve them.
An example is the reaction equation solved in Section~\ref{sec:example_reac}.
Since the stage values are not guaranteed to be positive, the RHS must be
defined also
for negative values. If there is not a natural definition for negative values,
one can instead extend the function in a smooth way or simply replace negative
stage values by zero,  e.g.\ replace \texttt{sqrt(u)} by \texttt{sqrt(max(u, 0))}.

A test problem similar to \cite{shampine_non-negative_2005} is the PDE
\begin{equation}
\label{eq:advection-decay}
\begin{aligned}
  \frac{\partial u(t,x)}{\partial t}
  &=
  -a \frac{\partial u(t,x)}{\partial x} - K u(t,x),
  && x \in (0, 1), t \in (0,1),
  \\
  u(t,0) &= 1,
  && t \in (0,1],
  \\
  u(0,x) &= 0,
  && x \in [0,1],
\end{aligned}
\end{equation}
which consists of an advection part and an exponential decay.
The numerical approximation uses the method of lines and a first order
upwind finite difference semidiscretization with $N = 100$ points.
This leads to the positivity preserving ODE
\begin{equation}
\label{eq:advection-decay-semidiscrete}
\frac{\mathrm d}{\mathrm d t} u_i = \frac{a}{\Delta x} \left( u_{i-1} - u_i \right) - K u_{i}.
\end{equation}
The parameters are set to $a=1$ and $K=1$.
We use the Dormand--Prince RK5 method and adapt the weights using the free adaptation.
In Figure\,\ref{fig:sol_AdDe} the results for $\Delta t = 0.015$ are plotted for different  values of time $t$. We can see that the solution approaches an exponential function with $t \rightarrow \infty$.
In Figure\,\ref{fig:weights_AdDe} the used weights are plotted. For $t\leq 0.375$ the weights are altered. For $t > 0.375$ the original weights lead to a positive solution.
\begin{figure}
\centering
\begin{subfigure}[t]{0.45\textwidth}
\centering
\includegraphics[width=1\textwidth]{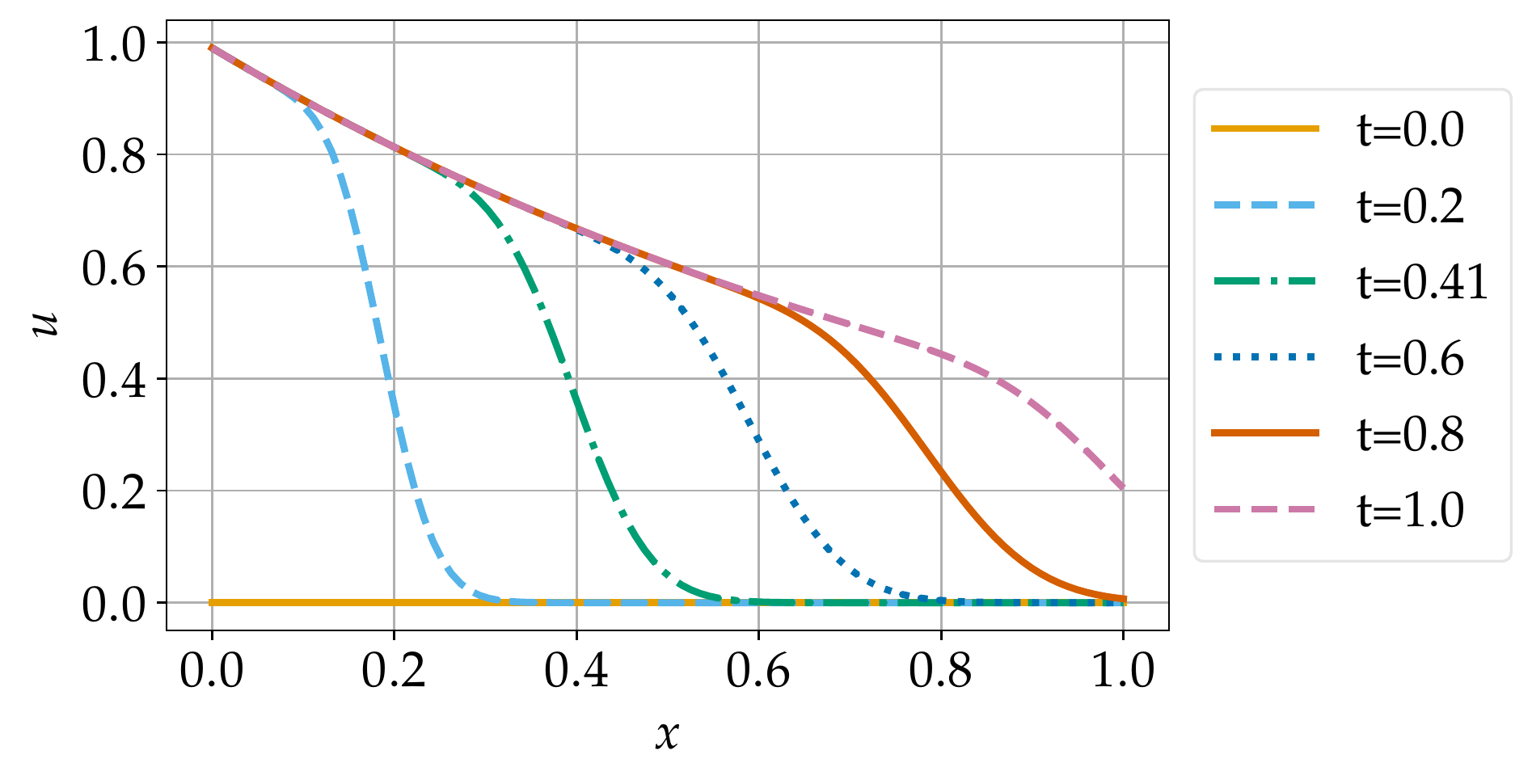}
\caption{Numerical solution at different times.}
\label{fig:sol_AdDe}
\end{subfigure}%
~
\begin{subfigure}[t]{0.45\textwidth}
\centering
\includegraphics[width=1\textwidth]{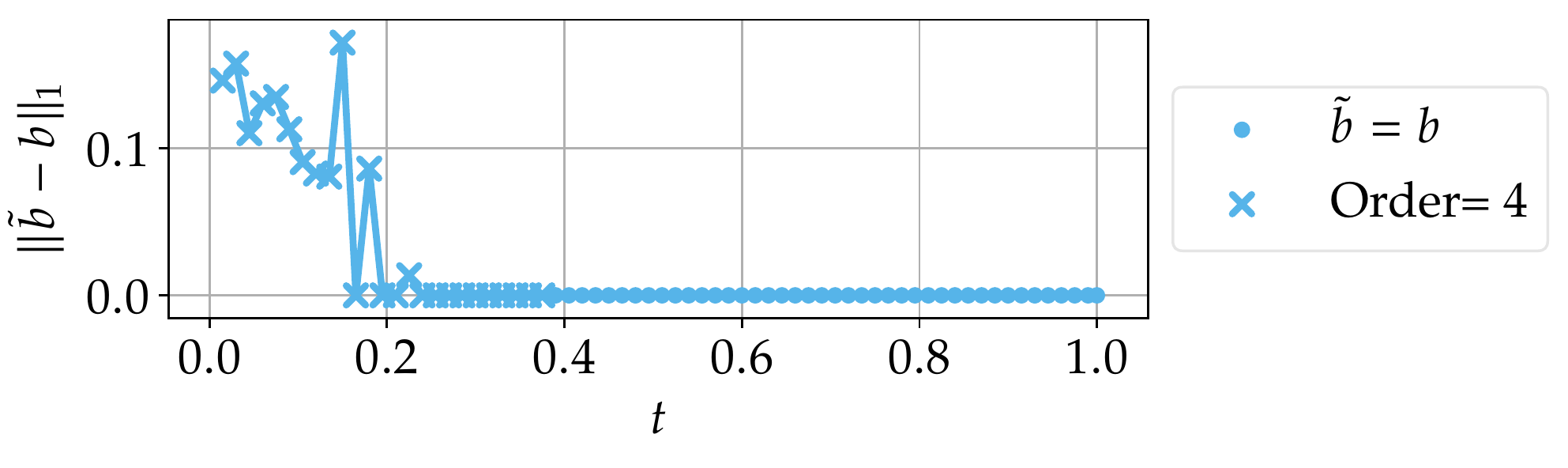}\\
\caption{Adaptation of the weights.}
\label{fig:weights_AdDe}
\end{subfigure}
\caption{Numerical results for the advection decay problem \eqref{eq:advection-decay}.}
\end{figure}

Next, different time steps are used.
For $\dt \leq 0.0082$ the unaltered method leads to positive solutions. For a larger $\dt$ the original method leads to negative values and the weights are altered. For $\dt >0.016$ the baseline method is no longer stable.
For the ODE, the reference solution can be computed using the matrix exponential.
In Figure\,\ref{fig:conv_expl} the convergence for $t=0.5$ is plotted for the altered and unaltered method.

\begin{figure}[ht]
\centering
\includegraphics[width=0.8\textwidth]{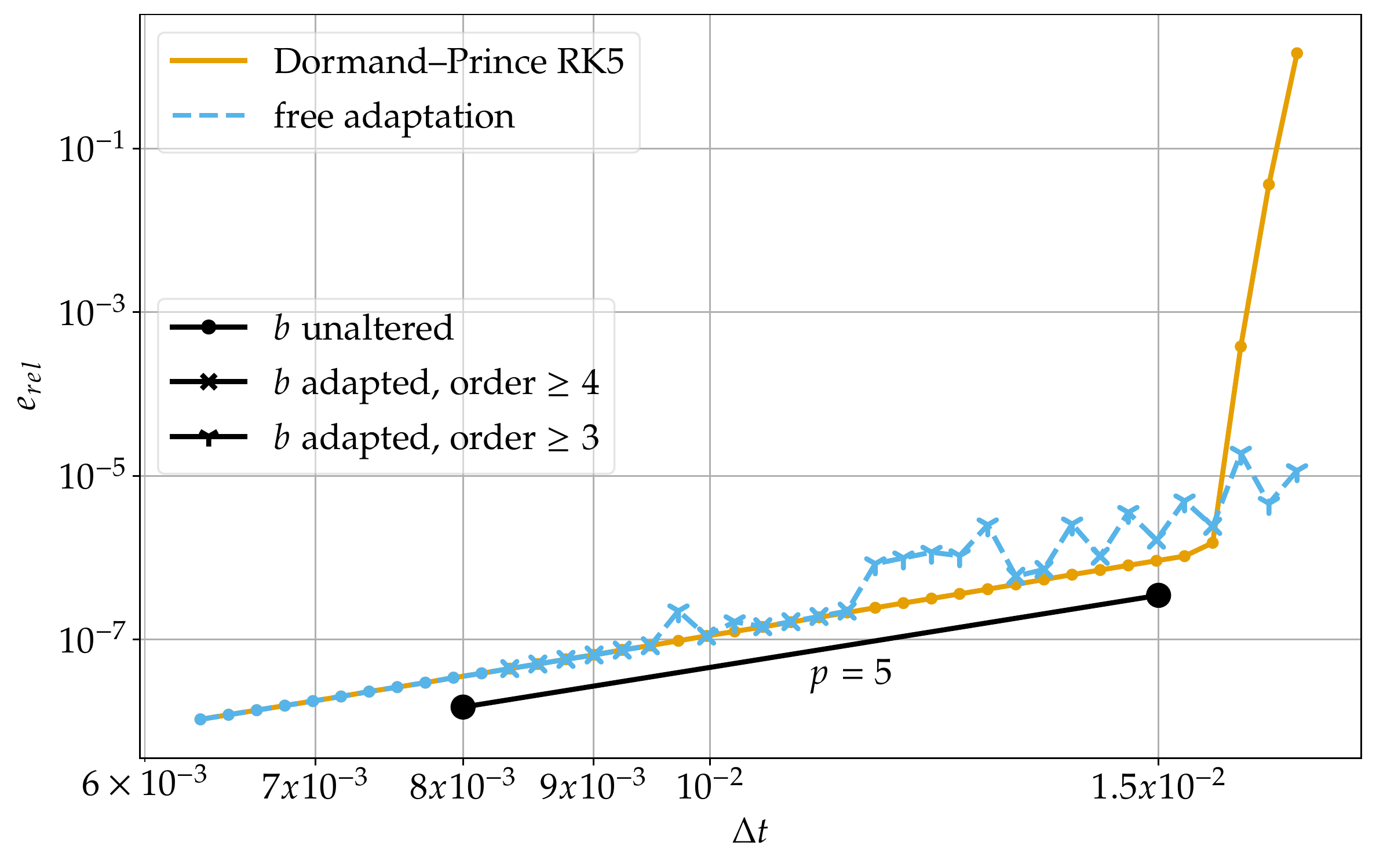}
\caption{Convergence results of the adapted Dormand--Prince RK5 for the advection decay problem \eqref{eq:advection-decay}.}
\label{fig:conv_expl}
\end{figure}

The unaltered method has the order $p=5$.
The values marked with a cross denote the numerical experiments that required an adaption of the weights.
Even though the order is reduced, most errors are still close to the error of the unaltered method.

It is natural to ask whether the adaptation of the weights through the algorithm proposed
here is more efficient than simply using a smaller step size.  For this problem that can
be discretized explicitly with a right-hand-side that is relatively cheap to evaluate,
using a smaller step size is
generally more efficient, at least with the current un-optimized implementation of the LP
solution.  For the problem considered in the next section, where an implicit
integrator is used, it is more efficient to maintain positivity with our proposed
approach instead of reducing the step size.
Adaptation of the weights could be made even more efficient with
an optimized implementation of the LP setup and solve; this is the subject of future work.

\subsection{Stiff problem with fixed step size}
Next, adaptive RKMs based on implicit methods are tested on stiff problems.
Implicit methods are an advantageous choice for a couple of reasons.
Firstly, the cost of solving the LP is relatively small compared to the cost of solving the stage equations.
Secondly, the time step is not limited by the stability of the method.
Therefore, it is possible to use larger time steps that are more likely to lead to negative values.

A very interesting class of methods are the implicit extrapolation methods. These allow changes of the weights without a reduction of the order, as discussed in Section\,\ref{sec:integration}.
Moreover, all stage values are computed using the BE method. Hence, all intermediate stages are positive.
Furthermore, an embedded BE step is included. This ensures that a positive solution always exists, even if it is of first order.

We test the proposed adaptation algorithm on the diffusion equation
\begin{equation}
\label{eq:diffusion}
\frac{\partial }{\partial t} u = D \frac{\partial^2}{\partial x^2} u
\end{equation}
with homogeneous Dirichlet boundary conditions on the domain $x = [-0.5,0.5]$ with $N=100$ points. The equation is semidiscretized using the 3-point-scheme
\begin{equation}
\frac{\mathrm d}{\mathrm d t} u_i = \frac{d}{\Delta x^2} \left( u_{i-1} - 2u_i + u_{i+1} \right).
\end{equation}
As initial condition $u^0 = (0,\cdots,0,1,0,\cdots,0)^T$ is used. The diffusion coefficient is $D=1$.

The ODE is solved using the BE~3 extrapolation method.
For large $\dt$ the method computes negative values for $u^1$.
These can be corrected by adapting the weights.
The solutions are computed using the free adaptation and convex adaptation for $\dt = \num{1e-3}$.
The results for the free adaptation are plotted in Figure\,\ref{fig:sol_Diff_a} and the corresponding change of the weights
is shown in Figure\,\ref{fig:weights_Diff_a}.
The original solution for the first step is negative. Therefore, the weights have to be changed.
If we take a look at the solution after the first time step at $t=0.001$ we can see that at $x=0$ the solution is smaller than the solution at the surrounding points.
This is not physical.
The next time steps lead to physical solutions again.
To prevent this glitch from happening we choose the weights based on a convex adaptation.
A first order embedded method is added. The solution is shown in Figure\,\ref{fig:sol_Diff_c} and the weights are visualized in Figure\,\ref{fig:weights_Diff_c}.
The weights for the first step are altered again.
The weights obtained by the convex adaptation are different from the weights obtained
by taking the free adaptation.
The solution for $t=0.001$ computed with the convex adaptation is physical.
For both approaches, the remaining steps can be computed with the standard weights.
\begin{figure}
\centering
\begin{subfigure}[b]{0.45\textwidth}
\centering
\includegraphics[width=1\textwidth]{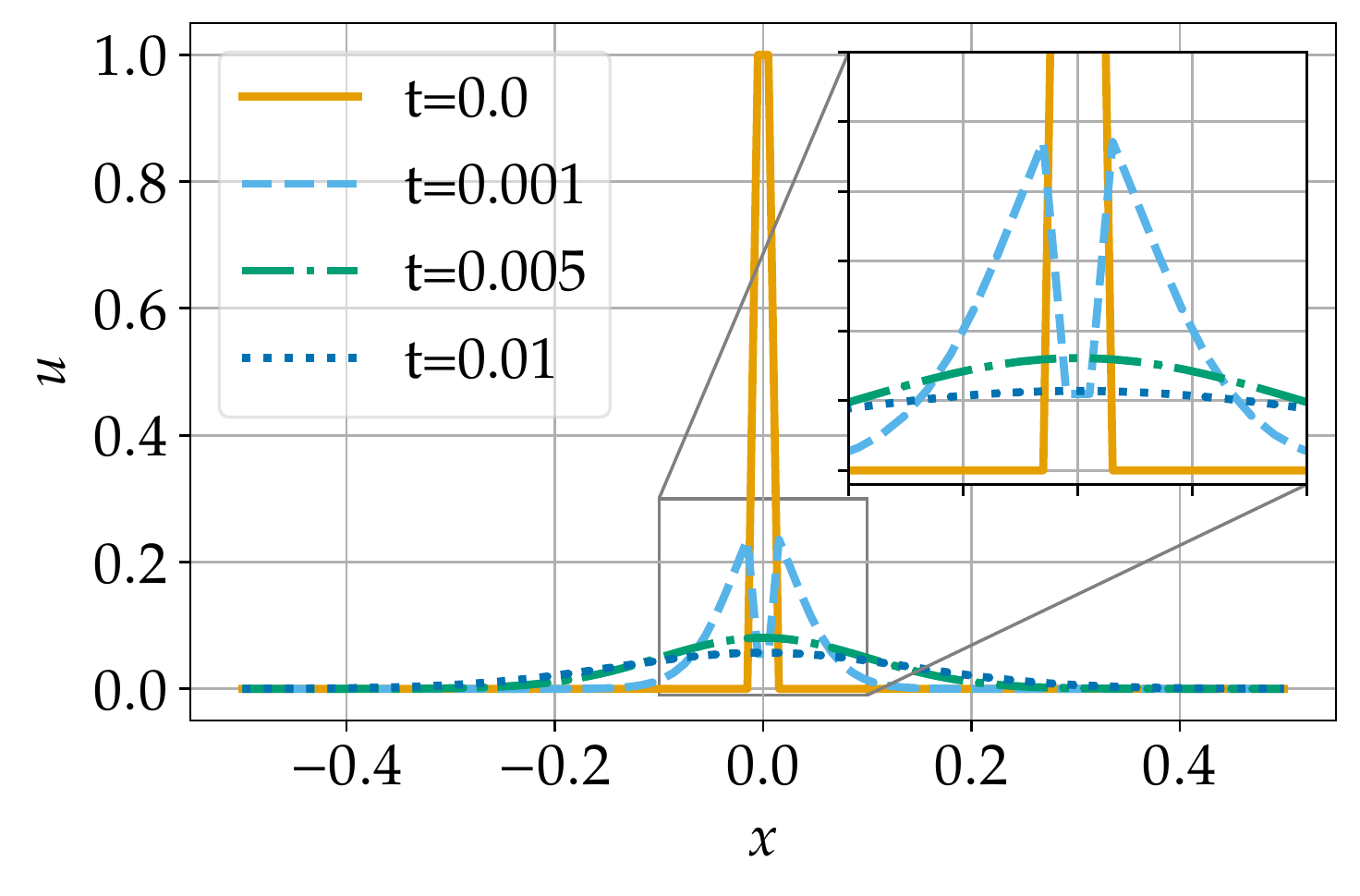}
\caption{Solution using the free adaptation.}
\label{fig:sol_Diff_a}
\end{subfigure}
\begin{subfigure}[b]{0.45\textwidth}
\centering
\includegraphics[width=1\textwidth]{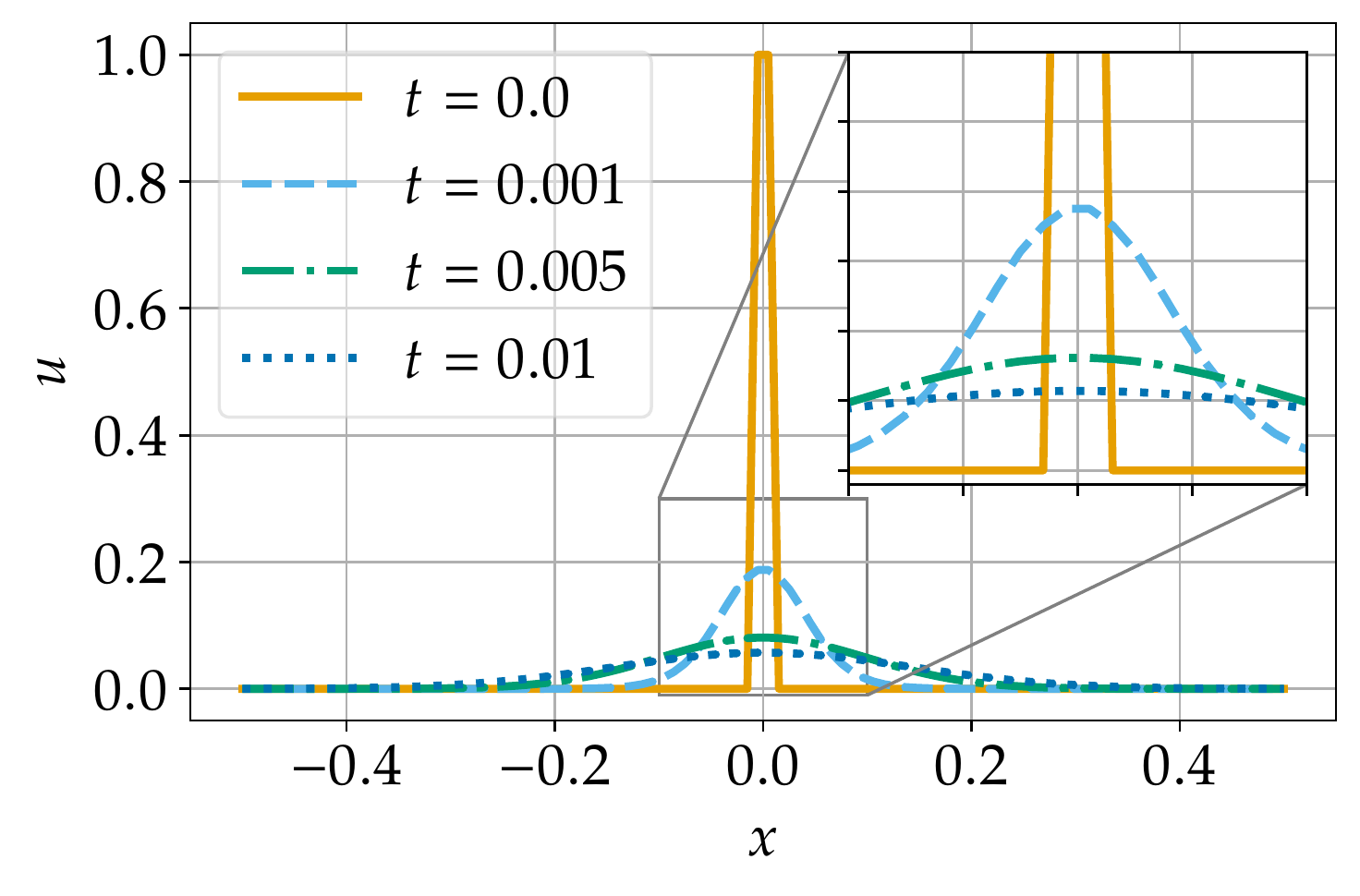}
\caption{Solution using the convex adaptation.}
\label{fig:sol_Diff_c}
\end{subfigure}
\\
\begin{subfigure}[b]{0.45\textwidth}
\centering
\includegraphics[width=1\textwidth]{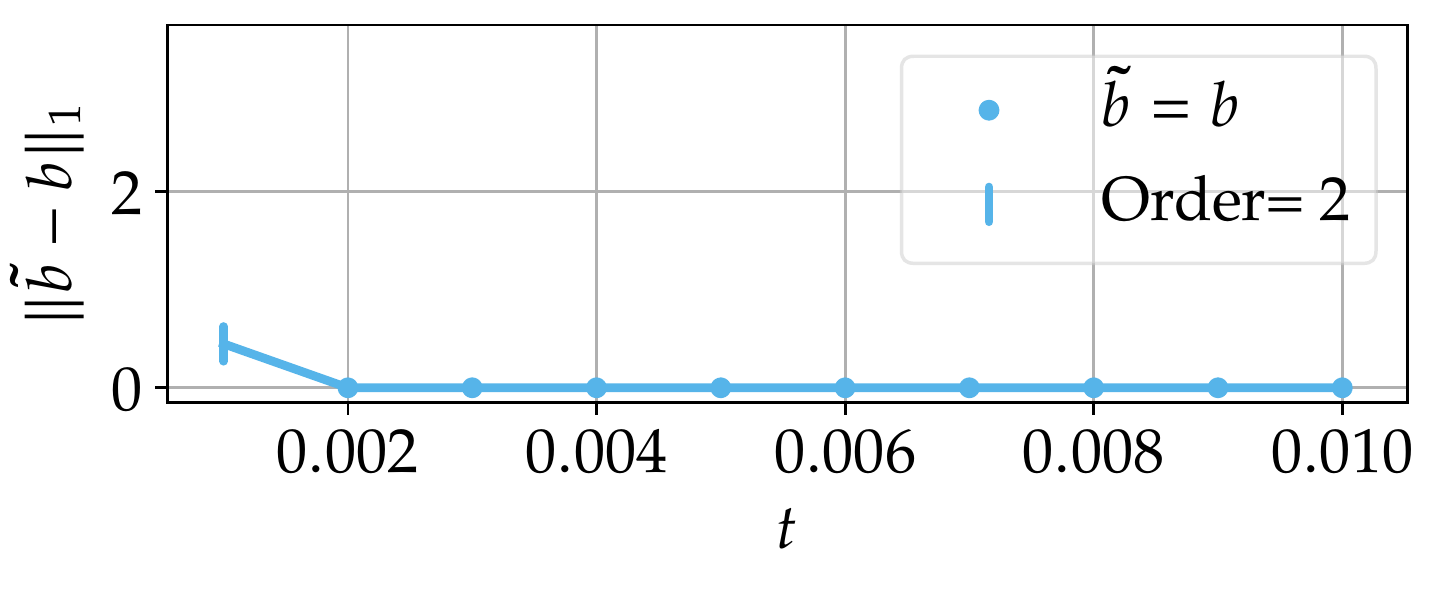}
\caption{Change of weights for free adaptation.}
\label{fig:weights_Diff_a}
\end{subfigure}
\begin{subfigure}[b]{0.45\textwidth}
\centering
\includegraphics[width=1\textwidth]{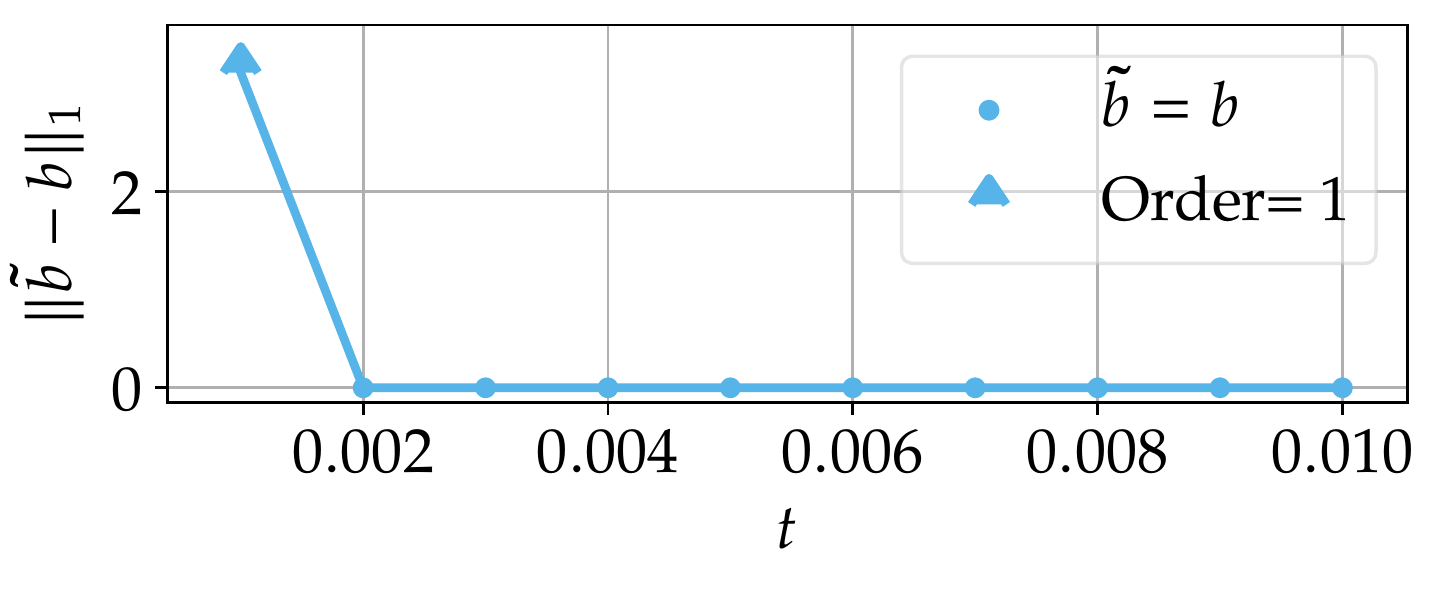}
\caption{Change of weights for convex adaptation.}
\label{fig:weights_Diff_c}
\end{subfigure}
\caption{Numerical results for the diffusion problem \eqref{eq:diffusion} and the
         adapted BE~3 extrapolation method.}
\end{figure}

In Figure\,\ref{fig:conv_impl}, the convergence is shown for the unaltered BE~3 extrapolation method (potentially resulting in negative values), the adaptive method with free adaptation, and the adapted method using convex adaptation.
Additionally, results for the BE method are plotted. It is only of 1st order but preserves positivity for all $\dt$.
For $\dt < \num{3e-5} $ the standard weights yield to a positive result. For larger $\dt$ the weights have to be adapted to ensure positivity.
The free adaptation results in similar convergence properties as the original method.
This can be expected, because the adapted method is still of 3rd order.
The convex adaptation yields larger errors than the free adaptation but leads to physical solutions for all time-steps.
This is no surprise because the adapted RKM used for the first step is only of first order.
But the adaptive method still outperforms the BE, even when accounting for the higher cost per step.

\begin{figure}[ht]
\centering
\includegraphics[width=\textwidth]{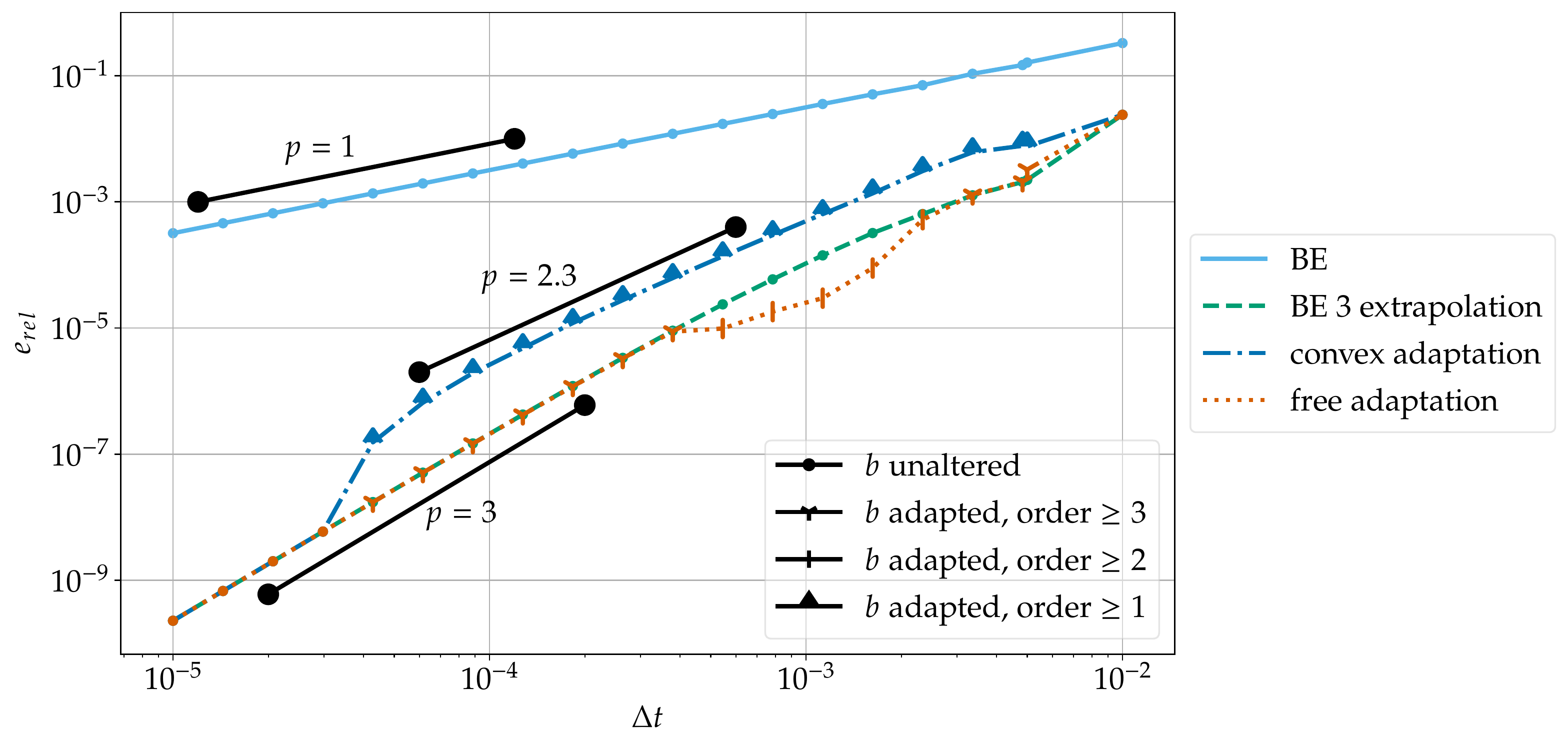}
\caption{Convergence result of baseline and adapted BE~3
         extrapolation methods for the diffusion problem \eqref{eq:diffusion}.}
\label{fig:conv_impl}
\end{figure}

\subsection{Stiff problem with adaptive step size}

Next we test the adaptive RKM on a more complex problem.
For this, we consider the advection-diffusion-production-destruction system  \cite{kopecz_comparison_2019}
\begin{subequations}
\label{eq:ADR}
\begin{align}
\frac{\partial u_1}{\partial t} &=-a \frac{\partial u_1}{\partial x} + d\frac{\partial^2 u_1}{\partial x ^2} + 0.01u_2 + 0.01 u_3 +0.003u_4 - \frac{u_1 u_2}{0.01+u_1}, \\
\frac{\partial u_2}{\partial t} &=-a \frac{\partial u_2}{\partial x} + d\frac{\partial^2 u_2}{\partial x ^2} + \frac{u_1u_2}{0.01+u_1} -0.01 u_2-0.5(1-\exp(-1.21 u_2^2)) u_3 -0.05 u_2, \\
\frac{\partial u_3}{\partial t} &=-a \frac{\partial u_3}{\partial x} + d\frac{\partial^2 u_3}{\partial x ^2} + 0.5(1-\exp(-1.21u_2^2)) u_3 - 0.01 u_3 -0.02 u_3, \\
\frac{\partial u_4}{\partial t} &=-a \frac{\partial u_4}{\partial x} + d\frac{\partial^2 u_4}{\partial x ^2} + 0.05 u_2 + 0.02 u_3 - 0.003u_4,
\end{align}
\end{subequations}
with parameters $a=\num{1e-2} $ and $ d=\num{1e-6}$.
The PDE is simulated on the domain $x = [0,1]$ with $N=100$ points and periodic boundary conditions.
The advection part is semidiscretized using a first order upwind scheme and the diffusion part is semidiscretized using a central 3-point-scheme. This leads to a positivity preserving system of ODEs which conserves the total mass $\sum u$.
The computation is done using the BE~3 extrapolation
method with free adaptation.
As step size control a PI-control from\,\cite{hairer_solving_1996} is used. The error was estimated using \eqref{eq:Err}. The tolerance was set to $Tol = 0.01$.
The final time is $t_{end} = 50$.

The simulation required 264 steps. Of these, 72 required an adaptation of the weights.
All adapted weights are still of 3rd order.
The solutions for $t=9,t=18,t=27$ and $t=50$ are plotted in Figure\,\ref{fig:Sol_ADP}.
In Figure\,\ref{fig:sol_ADP18} it can be seen that the reaction occurs in a small interfaces.
Outside of this regions quantities are close to zero. Therefore, it is very likely that negative values occur in the numerical approximation.
In Figure\,\ref{fig:sol_ADP27} it can be seen that at $T=27$ the two reaction interfaces merged. Afterwards the reaction stops and the behavior is mainly controlled by the advection and diffusion part.

In Figure\,\ref{fig:Stats_ADP} different values are plotted.
In the first subplot the step size is plotted. For $t<25$ the time steps are small. After $t = 30$ the step size increases, because the solution only evolves slowly afterwards.
In the second subplot the minimum of $u_1,u_2,u_3,u_4$ is plotted for all time steps that initially lead to negative values. This value is computed before and after adapting the weights.
We can see that relatively large negative values occurred at some time steps.
After the adaption of the weights, all values are close to $0$. Therefore, the adaption of weights successfully preserved positivity.
In the third subplot the approximated truncation error $err_T$ and the perturbation $\perturb$ are plotted.
We can see that $\perturb$ is of a similar magnitude as the truncation error. Therefore, the total error of the method is not increased drastically.
In the next subplots objective function is plotted.
We can see that the changes to the weights are only very small. The adapted RKM is still very close to the original RKM.
In the last subplot the deviation of the sum over $u_1,u_2,u_3,u_4$ from the initial sum is plotted.
The mass is conserved within roundoff error.

\begin{figure}
    \centering
    \begin{subfigure}[b]{0.49\textwidth}
        \includegraphics[width=\textwidth]{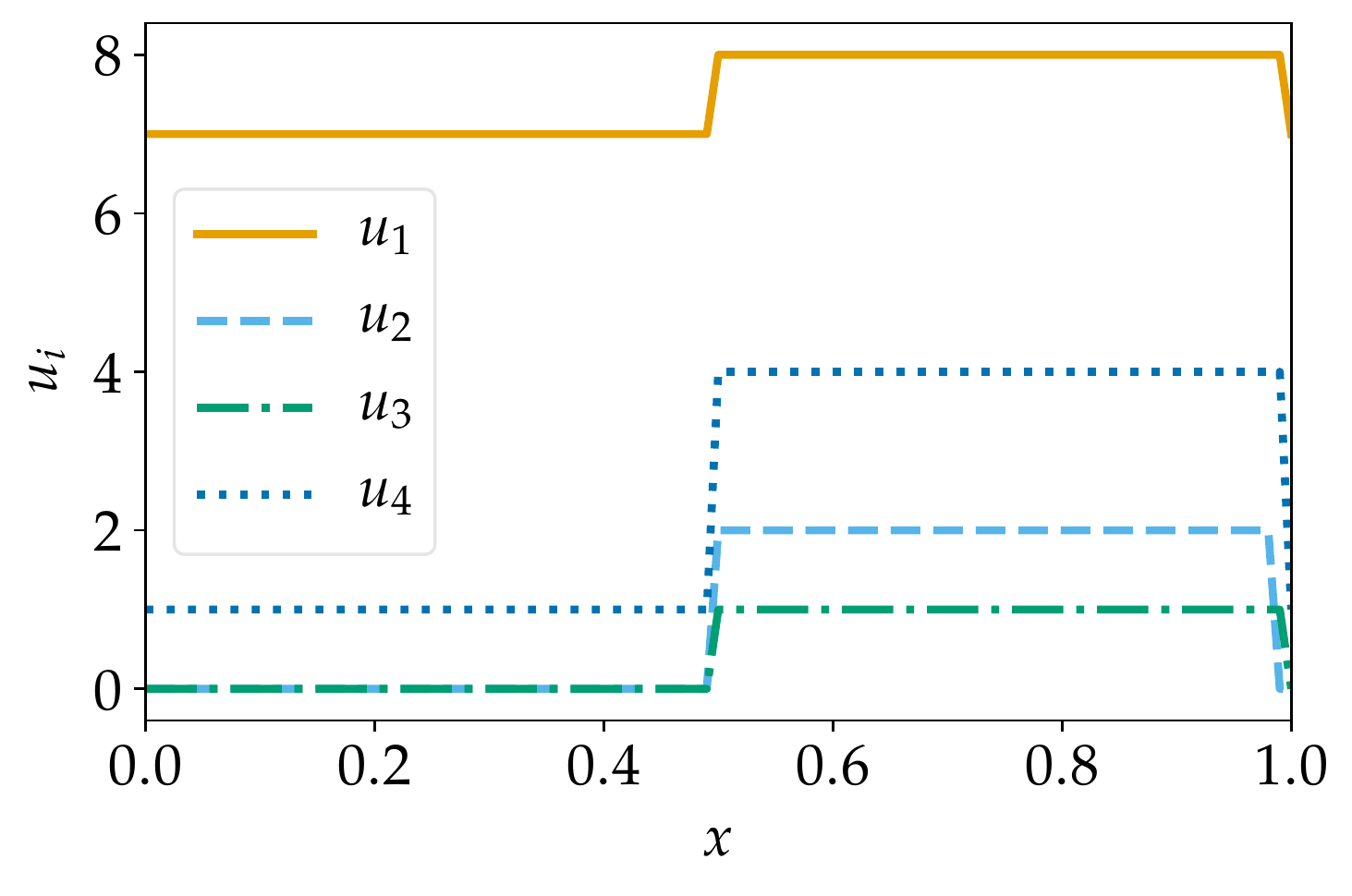}
        \caption{$t=0$.}
        \label{fig:sol_ADP00}
    \end{subfigure}
    \begin{subfigure}[b]{0.49\textwidth}
        \includegraphics[width=\textwidth]{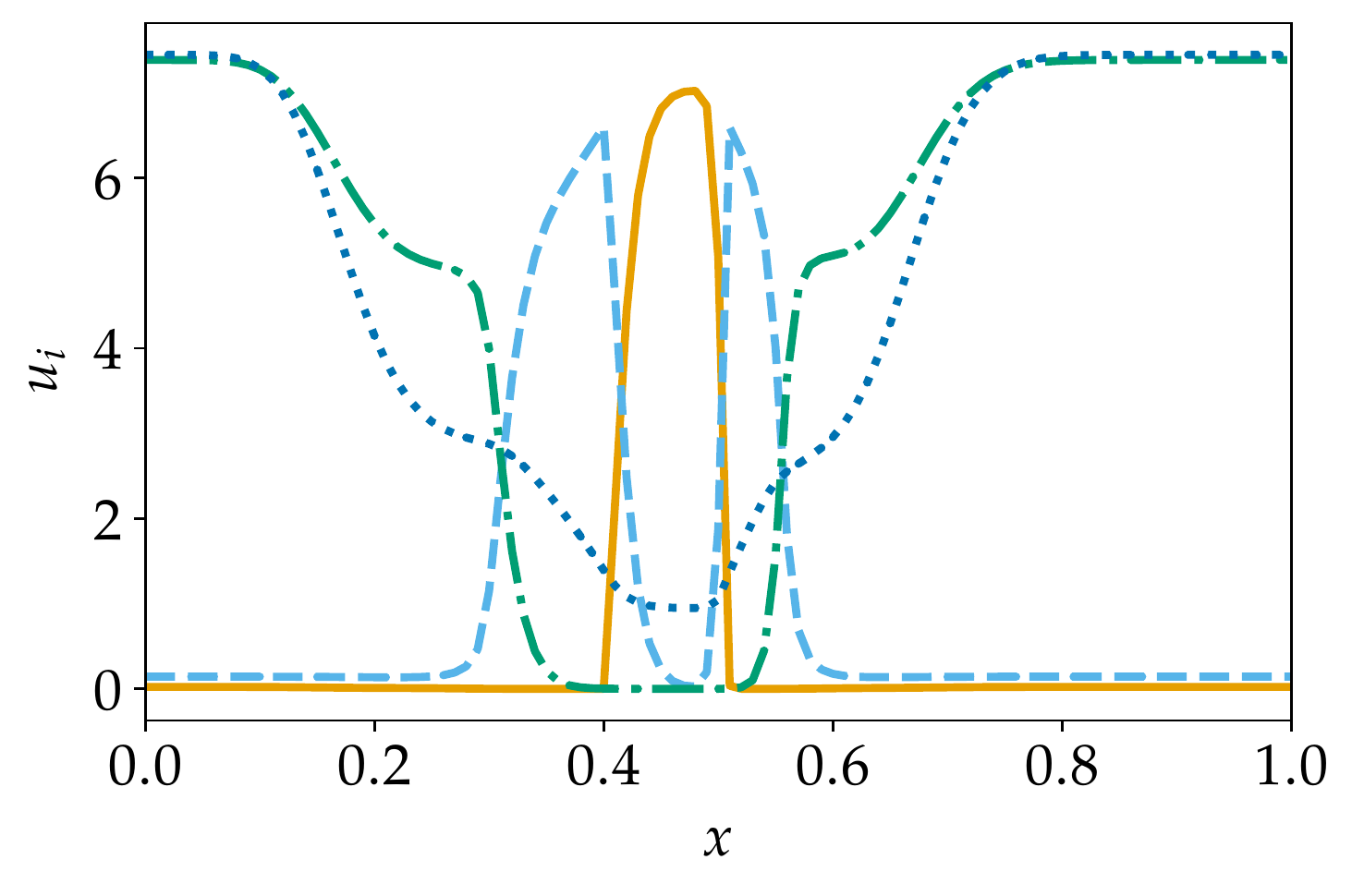}
        \caption{$t=18$.}
        \label{fig:sol_ADP18}
    \end{subfigure}

    \begin{subfigure}[b]{0.49\textwidth}
        \includegraphics[width=\textwidth]{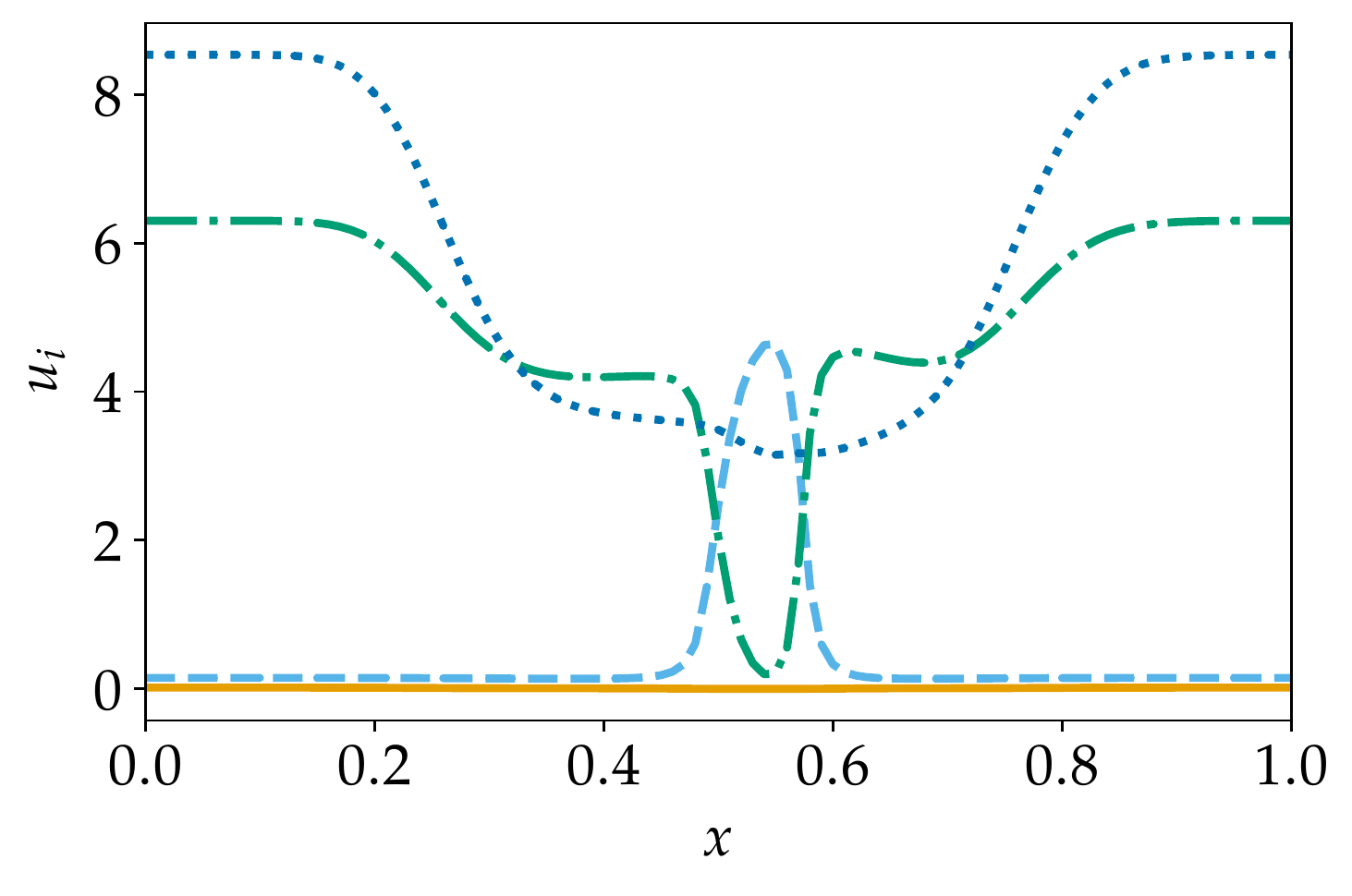}
        \caption{$t=27$.}
        \label{fig:sol_ADP27}
    \end{subfigure}
	\begin{subfigure}[b]{0.49\textwidth}
        \includegraphics[width=\textwidth]{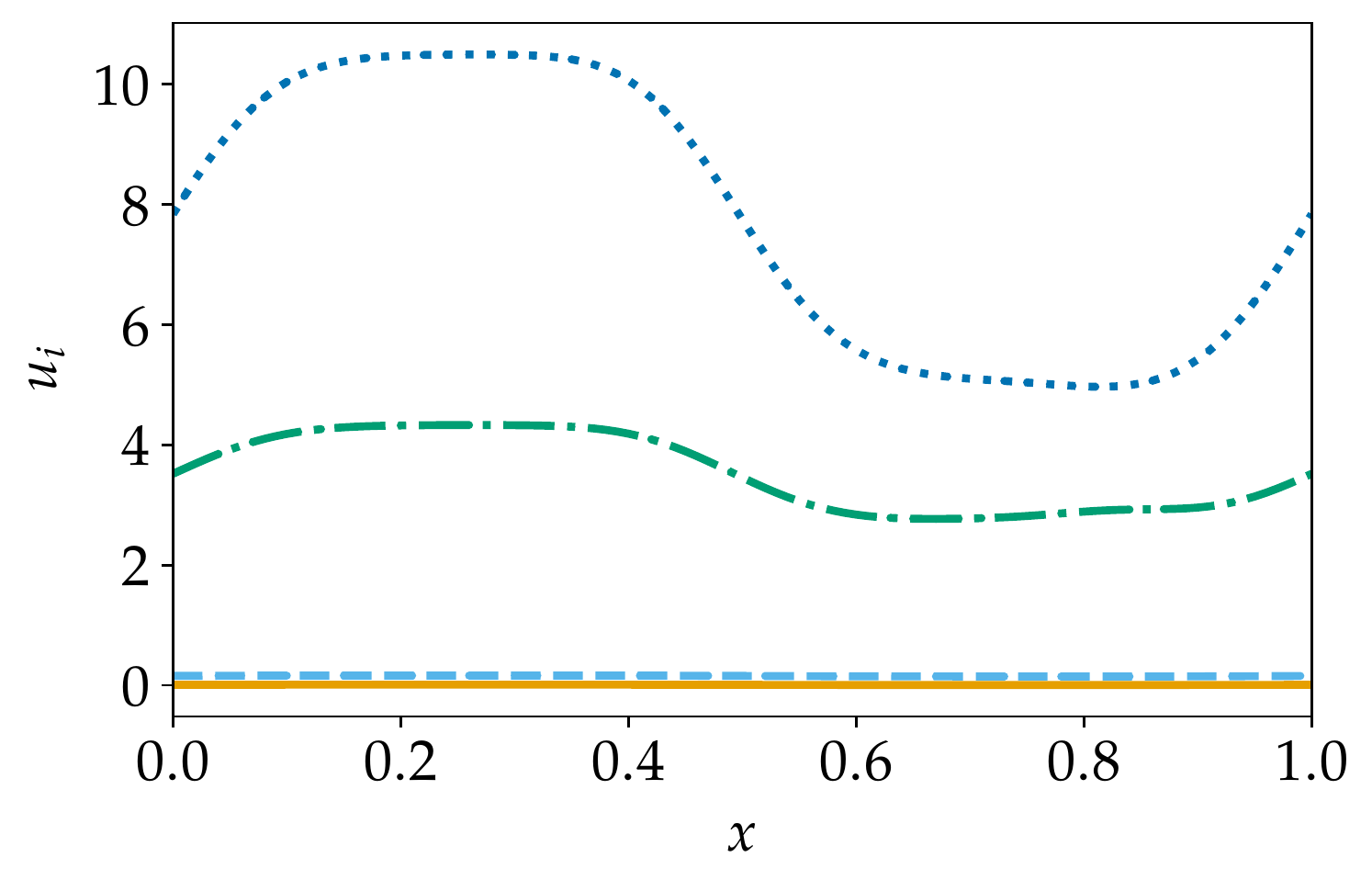}
        \caption{$t=50$.}
        \label{fig:sol_ADP50}
    \end{subfigure}
    \caption{Numerical solution of the advection-diffusion-reaction problem \eqref{eq:ADR} at different times.}\label{fig:Sol_ADP}
\end{figure}

\begin{figure}
\centering
\includegraphics[width=0.85\textwidth]{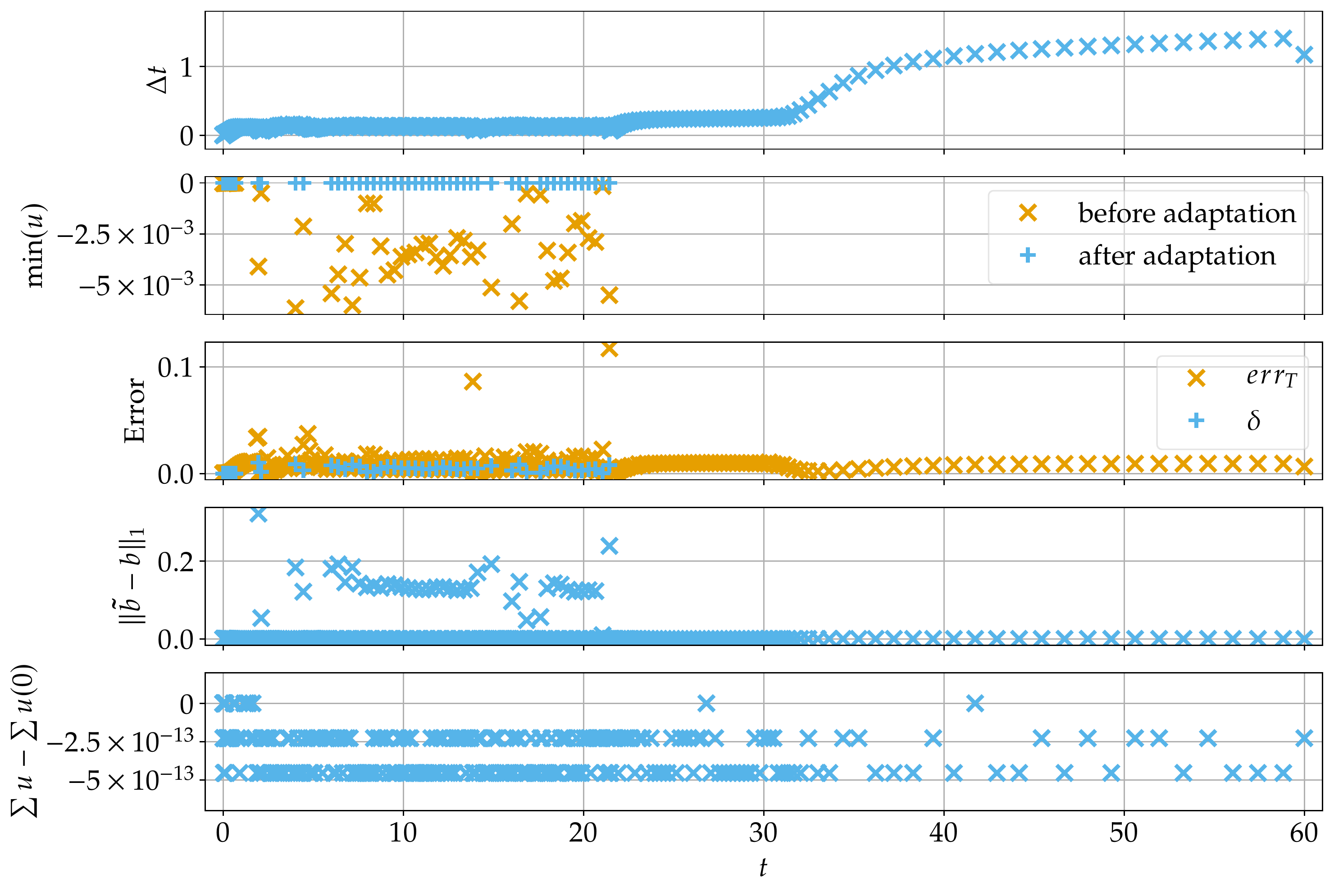}
\caption{Statistics of computation of the advection-diffusion-reaction problem \eqref{eq:ADR}.}
\label{fig:Stats_ADP}
\end{figure}

\subsection{Stiff problem with adaptive step size II}

Here, we consider the stratospheric reaction problem of
\cite{sandu2001positive}, which models the reactions of the substances
in the concentration vector $u = [O^{1D}, O, O_3, O_2, NO, NO_2]$.
This ODE has two linear invariants
\begin{align}
  m_O^T u &= \text{const}, & m_O &= [1,1,3,2,1,2]^T, \\
  m_N^T u &= \text{const}, & m_N &= [0,0,0,0,1,1]^T,
\end{align}
which describe the conservation of the total mass of oxygen and nitrogen,
respectively.

The reaction system
\begin{equation}
\label{eq:stratospheric}
\begin{aligned}
  \frac{\mathrm d}{\mathrm d t} O^{1D} &= r_5 - r_6 - r_7, \\
  \frac{\mathrm d}{\mathrm d t} O      &= 2 r_1 - r_2 + r_3 - r_4 + r_6 - r_9 + r_{10} - r_{11}, \\
  \frac{\mathrm d}{\mathrm d t} O_3    &= r_2 - r_3 - r_4 - r_5 - r_7 - r_8, \\
  \frac{\mathrm d}{\mathrm d t} O_2    &= -r_1 - r_2 + r_3 + 2 r_4 + r_5 + 2r_7 + r_8 + r_9, \\
  \frac{\mathrm d}{\mathrm d t} NO     &= -r_8 + r_9 + r_{10} - r_{11}, \\
  \frac{\mathrm d}{\mathrm d t} NO_2   &= r_8 - r_9 - r_{10} + r_{11},
\end{aligned}
\end{equation}
with time $t$ in seconds is given by the reaction rates
\begin{equation}
\begin{aligned}
  r_{1}  &= k_1\, O_2,          &  k_{1}  &= \num{2.643e-10} \sigma^3, &
  r_{2}  &= k_2\, O\, O_2,      &  k_{2}  &= \num{8.018e-17}, \\
  r_{3}  &= k_3\, O_3,          &  k_{3}  &= \num{6.120e-4} \sigma, &
  r_{4}  &= k_4\, O_3\, O,      &  k_{4}  &= \num{1.567e-15}, \\
  r_{5}  &= k_5\, O_3,          &  k_{5}  &= \num{1.070e-3} \sigma^2, &
  r_{6}  &= k_6\, M\, O^{1D},   &  k_{6}  &= \num{7.110e-11}, \\
  r_{7}  &= k_7\, O^{1D}\, O_3, &  k_{7}  &= \num{1.200e-10}, &
  r_{8}  &= k_8\, O_3\, NO,     &  k_{8}  &= \num{6.062e-15}, \\
  r_{9}  &= k_9\, NO_2\, O,     &  k_{9}  &= \num{1.069e-11}, &
  r_{10} &= k_{10}\, NO_2,      &  k_{10} &= \num{1.289e-2} \sigma, \\
  r_{11} &= k_{11}\, NO\, O,    &  k_{11} &= \num{1.0e-8},
\end{aligned}
\end{equation}
where $M  = \num{8.120e16}$ and
\begin{align}
    T &= (t/3600) \bmod 24,
    \quad T_r = 4.5,
    \quad T_s = 19.5 \\
    \sigma(T) &=
    \begin{cases}
    0.5+0.5 \cos\Bigl(\pi \Bigl|\frac{(2T-T_r-T_s)}{(T_s-T_r)}\Bigr| \frac{(2T-T_r-T_s)}{(T_s-T_r)} \Bigr)       & \quad \text{if } T_r \leq T \leq T_s,\\
    0  & \quad \text{otherwise}.
    \end{cases}
\end{align}

The initial conditions are
\begin{equation}
  u(t_0) = [
    \num{9.906e+1},
    \num{6.624e08},
    \num{5.326e11},
    \num{1.697e16},
    \num{4.000e6},
    \num{1.093e9}
  ]^T.
\end{equation}
The system was normalized internally such that $\forall n\colon u_n(t_0) = 1$
for the computation to achieve a suitable error estimation.
The system is solved in the time from $t_{0} = \SI{12}{h}$ to $t_{end} = \SI{84}{h}$
using the BE~3 extrapolation method with free adaptation and step size
control.

\begin{figure}
\centering
\includegraphics[width=0.9\textwidth]{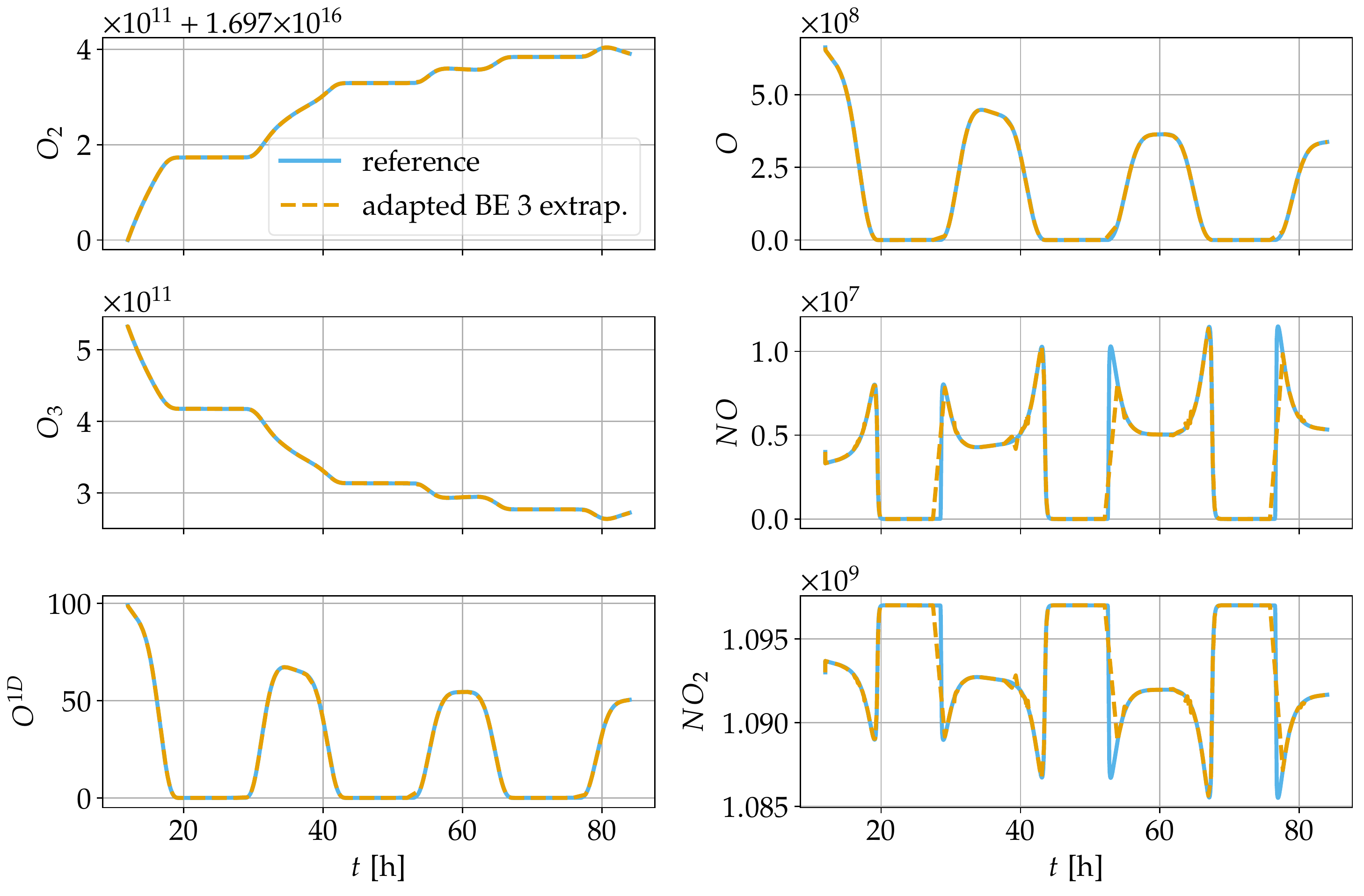}
\caption{Numerical approximation of stratospheric reaction system
         \eqref{eq:stratospheric}.}
\label{fig:Stratospheric_time}
\end{figure}

The results are shown in Figure~\ref{fig:Stratospheric_time}.
The adapted solution is close to the reference solution obtained
with the unadapted BE~3 extrapolation method and a higher accuracy.
For this solution, 249 steps were computed; two of these were rejected due to
a violation of the error bound.
More details are shown in Figure~\ref{fig:Stats_Strat}. The rejected steps are drawn with thick crosses.
The step size $\dt$ undergoes multiple sudden changes due to the explicit dependence
on time of the problem.
The minimum values before and after the adaptation are also shown for all steps where the initial values were negative.
This was only the case for some time intervals. 25 steps exhibited negative values.
Almost all of them were very close to zero and the adaptation did only show a small improvement.
The smallest value of the solution is $\min(u) = \num{-1.59e-11}$.
The used weights are also very close to the original weights, except of the steps that were rejected anyway due to a violation of the tolerance.
The change of the two linear invariants are shwon in the last two subplots. Both are preserved within roundoff error.

\begin{figure}
\centering
\includegraphics[width=0.85\textwidth]{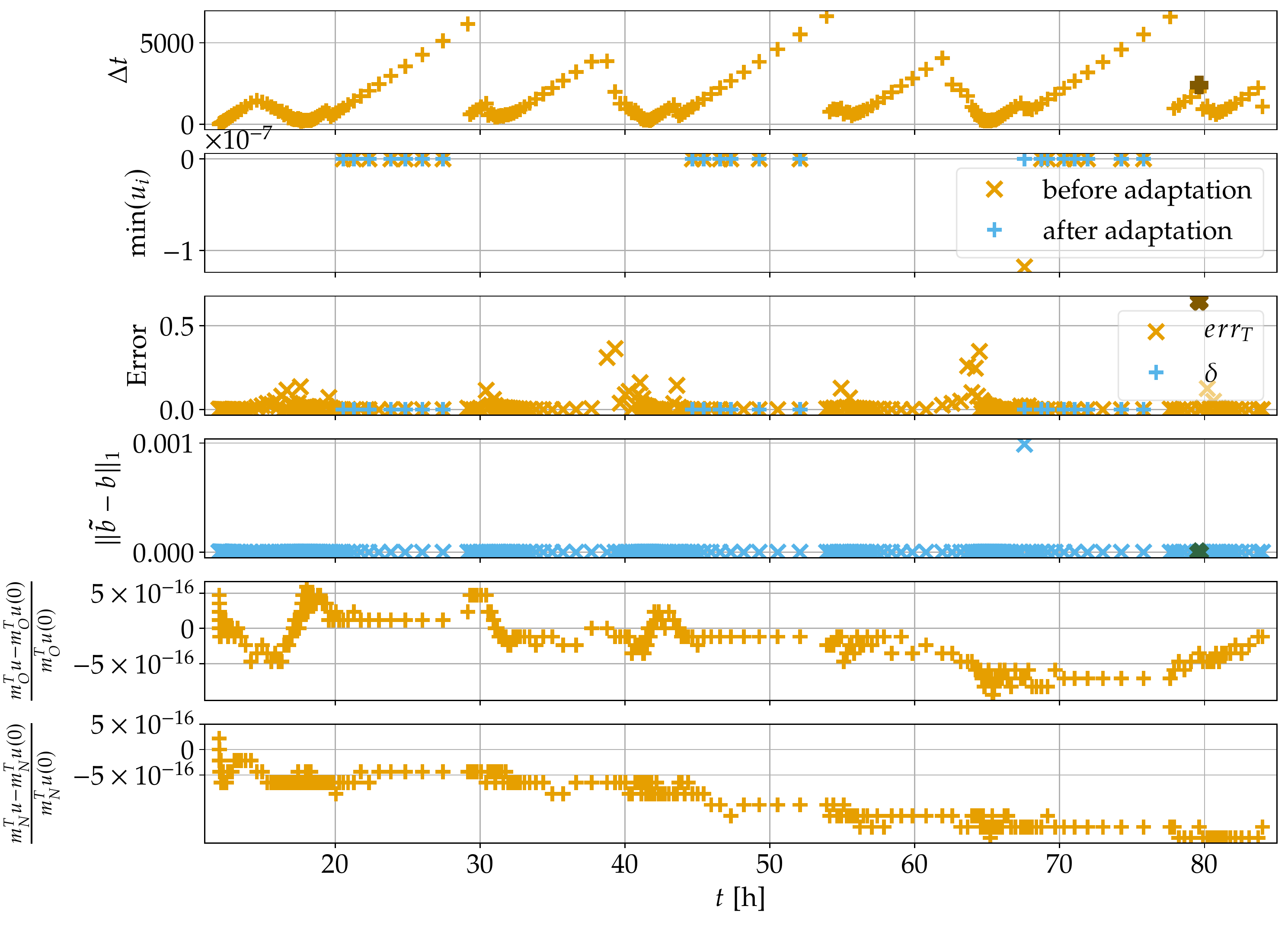}
\caption{Statistics of computation of the stratospheric reaction system
         \eqref{eq:stratospheric}.}
\label{fig:Stats_Strat}
\end{figure}

\section{Conclusion} \label{sec:conclusion}

It is possible to adapt the weights to enforce positivity for RKMs that are not positivity preserving.
One main limitation is that the resulting order has to be lower than the number of stages.
An error approximation for this method was given.
The region of absolute stability is altered by changing the weights. This effect can be predicted or controlled.
Used with explicit methods the positivity for some test problems could be recovered.
Because the time step size is limited by the stability it is only useful for a small interval of time steps.
The adaptive method is mainly interesting for diagonally implicit methods.
The times step size is not limited by stability.
Also, the cost of solving the LP is not a crucial factor.
If the negative values occurring are not too large, which can be expected for most computations, adapting the weights is a potential way to ensure positivity.

\printbibliography

\end{document}